\theoremstyle{normal} 
\newtheorem{Thm}{\bf Theorem}[section]
\newtheorem{Lemma}[Thm]{\bf Lemma}
\newtheorem{Coro}[Thm]{\bf Corollary}
\newtheorem{Prop}[Thm]{\bf Proposition}
\newtheorem{claim}[Thm]{\bf Claim}
\theoremstyle{definition} 
\newtheorem{Def}[Thm]{\bf Definition}
\newtheorem{Rem}[Thm]{\bf Remark}
\newtheorem{Ex}[Thm]{\bf Example}
\newtheorem{Nota}[Thm]{\bf Notation}
\newtheorem{Const}[Thm]{\bf Construction}
\newtheorem*{Const0}{\bf Construction}
\newcommand{\Spec}{\mathrm{Spec}\ }
\newcommand{\Sing}{\mathrm{Sing}\ }
\begin{document}
\title{Jet schemes of singular surfaces of types $D_4^0$ and $D_4^1$ in characteristic $2$}

\author[Y. Koreeda]{Yoshimune Koreeda}

\subjclass[2010]{ 
Primary 14J17
}
%
\keywords{Jet scheme; rational double point singularities; positive characteristic}
\address{
Department of Mathematics, Graduate School of Science \endgraf
Hiroshima University \endgraf
1-3-1 Kagamiyama, Higashi-Hiroshima, 739-8526 \endgraf
Japan
}
\email{y-koreeda@hiroshima-u.ac.jp}


	\maketitle
	\begin{abstract}
		Let $k$ be an algebraically closed field,
		$S$ a variety over $k$ and $m$ a nonnegative integer.
		There is a space $S_m$ over $S$,
		called the jet scheme of $S$ of order $m$,
		parametrizing $m$-th jets on $S$.
		The fiber over the singular locus of $S$ is called the singular fiber.
		
		In this paper,
		we consider the singular fibers of the jet schemes of 2-dimensional rational double points over a field $k$ of characteristic $2$ whose resolution graph is of type $D_4$.
		There are two types of such singularities,
		of type $D_4^0$ and type $D_4^1$.
		We give the irreducible decomposition of the singular fiber and describe the configuration of the irreducible components.
		The case of a $D_4^0$-singularity is quite similar to the case of characteristic $0$ studied in \cite{Ky}.
		The case of $D_4^1$-singularity requires more elaborate analysis of certain subsets of the singular fibers.
	\end{abstract}
\section{Introduction}
Let $k$ be an algebraically closed field of an arbitrary characteristic and $S$ a surface over $k$.
The notion of a jet scheme was introduced by Nash in 1968 in a preprint,
later published in 1995 \cite{Na}.
Let $m$ be a nonnegative integer.
Roughly speaking,
an $m$-th jet of $S$ is an infinitesimal map of order $m$ from a germ of a curve to $S$,
and the scheme parametrizing $m$-th jets is called the $m$-th jet scheme.
We denote the $m$-th jet scheme of $S$ by $S_m$.

For nonnegative integers $m \geq m'$,
there is a natural morphism $\pi_{m,m'}^S : S_m \rightarrow S_{m'}$ called the truncation morphism.
The $0$-th jet scheme is identified with $S$ and we denote the truncation morphism $\pi_{m,0}^S$ by $\pi_m^S$.
We are interested in the fiber $S_m^0$ of $\pi_m^S$ over the singular locus $\Sing S$ of $S$,
which we call the singular fiber of $S$.
In characteristic $0$,
the relation between the singular fibers of jet schemes of surfaces and the exceptional curves of the minimal resolutions of singularities of surfaces was studied in a series of papers by Mourtada \cite{M1,M2} and Mourtada-Pl\'{e}nat \cite{MP}.
For a general surface $S$,
the relation between the irreducible components of $S_m^0$ and the exceptional curves of the minimal resolution of $S$ is not simple.
However,
for rational double points,
Mourtada gave a one-to-one correspondence between the irreducible components of the singular fiber and the exceptional curves of the minimal resolution.

In \cite{Ky},
this correspondence was studied in more detail.
For a fixed order $m$ of the jet scheme of a singular surface of type $A_n$ or type $D_4$,
the configuration of the irreducible components of the singular fiber was investigated.
Furthermore,
a graph was constructed using this information,
and the graph turned out to be isomorphic to the resolution graph of the singularity for a sufficiently large $m$.
Concretely,
the graph is constructed as follows:
Let $Z_m^1,...,Z_m^n$ be the irreducible components of the singular fiber $S_m^0$.
	\begin{Const0}[{\cite[Construction 2.15]{Ky}}]
		Let $V = \{ Z_m^1,...,Z_m^n \}$,
		and let $E \subseteq \{ Z_m^i \cap Z_m^j \mid i,j \in \{ 1,...,n \} ,i \neq j \}$ be the set of the maximal elements for the inclusion relation.
		Then we construct a graph $\Gamma$ as the pair $(V,E)$,
		that is,
		the vertices of $\Gamma$ are elements of $V$ and there is given an edge between $Z_m^i$ and $Z_m^j$ if and only if $Z_m^i \cap Z_m^j \in E$.
	\end{Const0}

In this paper,
we consider $2$-dimensional rational double points in positive characteristics,
especially,
$D_4$-type singular surfaces in characteristic $2$.

First,
we consider the irreducible decomposition of the singular fiber in any characteristic.
For singular surfaces of type $A_n$,
the defining equation of the surface is given by $xy - z^{n+1}$ in any characteristics.
Here and henceforth,
the singular point is at the origin.
In this case the irreducible decomposition of the singular fiber was given by Mourtada in \cite{M1}.
For a singular surface of type $D_n$ for $n \geq 4$,
the defining equation is $x^2 - y^2z + z^n$ in characteristic $0$.
In the first possible case $n=4$,
if the characteristic is a prime different from $2$,
the defining equation is the same as in characteristic $0$.
One can check that the arguments in \cite{M2} work also in this case.

In characteristic $2$,
however,
there are two different types of singularities with the resolution graph of type $D_4$.
One is given by $x^2 + y^2z + yz^2$,
called a singularity of type $D_4^0$,
and the other given by $x^2 + y^2z + yz^2 + xyz$,
called a singularity of type $D_4^1$.
We note that the equation of a singularity of type $D_4^0$ is weighted homogenous while that of a singularity of type $D_4^1$ is not.
In Mourtada \cite{M2},
it was important in giving the irreducible decomposition of the singular fibers that the defining equation is weighted homogenous.
Thus,
for a singular surface of type $D_4^0$,
we will give the irreducible decomposition of the singular fiber using arguments as in Mourtada \cite{M2}.
On the other hand,
for a singular surface of type $D_4^1$,
we have to find another way to prove that certain sets are irreducible.
In this paper,
we do this by a careful study of the codimensions of certain subsets of the singular fiber.

Second,
as for the configuration of the irreducible components of the singular fiber,
we consider the inclusion relations among their intersections as in \cite{Ky}.
For the singular surfaces of type $A_n$,
we gave the irreducible decomposition of the intersections of the irreducible components of the singular fiber in characteristic $0$ in \cite{Ky}.
These decompositions are independent of the characteristics,
so the inclusion relations are the same as in \cite{Ky} in any characteristic.
Next,
we consider the singular surfaces of type $D_4$.
In this case,
while Mourtada gave the irreducible decomposition of the singular fiber in characteristic $0$ in \cite{M2},
generators of defining ideals of irreducible components was not known.
Notwithstanding,
in characteristic $0$,
we could determine the set $E$,
and hence the graph $\Gamma$,
in Construction in \cite{Ky}.
If the characteristic is greater than $2$,
we can determine the configuration using the same arguments as in \cite{Ky} and we do not deal with these cases in this paper.
If the characteristic is $2$,
the arguments need some modification,
and this is the case that we will deal with in this paper.

The following two theorems are the main results in this paper.
First,
we give the irreducible decomposition of the singular fiber.
	\begin{Thm}\label{main theorem1}
		Let $k$ be an algebraically closed field of characteristic $2$ and $S \subset \mathbb{A}^3$ the surface defined by $f = x^2+y^2z+yz^2$ or $g = x^2+y^2z+yz^2+xyz$ in the affine space over $k$.
		If $m \geq 5$,
		the irreducible decomposition of the singular fiber $S_m^0$ is given by
			\begin{center}
				$S_m^0 = Z_m^0 \cup Z_m^1 \cup Z_m^2 \cup Z_m^3$,
			\end{center}
		where $Z_m^0$,
		$Z_m^1$,
		$Z_m^2$ and $Z_m^3$ are as in Definition \ref{既約成分の定義イデアルD_4^0} or Definition \ref{definition of the irreducible components of singular fiber of D_4^1}.
	\end{Thm}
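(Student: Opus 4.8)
The plan is to work directly with the defining equations and the standard coordinates on the jet scheme. Write a general $m$-jet on $S$ as $x = \sum_{i=1}^{m} a_i t^i$, $y = \sum_{i=1}^{m} b_i t^i$, $z = \sum_{i=1}^{m} c_i t^i$ (the singular fiber sits over the origin, so the constant terms vanish), substitute into $f$ (resp.\ $g$), and collect the coefficient of each power $t^j$; these vanishing conditions cut out $S_m^0$ as a closed subscheme of $\mathbb{A}^{3m}$. In characteristic $2$ the square $x^2 = \sum_i a_i^2 t^{2i}$ has only even-degree terms, which is exactly the feature that makes the two cases $D_4^0$ and $D_4^1$ genuinely different and forces a separate treatment of the non-homogeneous term $xyz$ in $g$.

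The main steps are as follows. First, I would record the explicit equations of $S_m^0$ order by order, identify the lowest-order relations, and use them to see how the vanishing orders of $x$, $y$, $z$ along a jet are constrained — this produces a finite list of "types" of jets according to the leading orders, and each type will turn out to be (the generic point of) one of the candidate components. Second, I would take the sets $Z_m^0,\dots,Z_m^3$ as given by \eqref{define the irreducible components for a D_4^0-type} or Definition \ref{definition of the irreducible components of singular fiber of D_4^1} and check the two inclusions that constitute the decomposition: that each $Z_m^i$ is contained in $S_m^0$ (a direct substitution), and that $S_m^0 \subseteq Z_m^0 \cup Z_m^1 \cup Z_m^2 \cup Z_m^3$, which amounts to showing every jet satisfying the equations falls into one of the leading-order types from the first step, using the hypothesis $m \ge 5$ to guarantee enough coefficients are available for the case analysis to close. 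Third, I would verify that each $Z_m^i$ is irreducible and that none is contained in the union of the others, so that the four sets are genuinely the irreducible components; for $D_4^0$ this follows from weighted homogeneity as in Mourtada \cite{M2}, while for $D_4^1$ it requires the codimension estimates for the relevant subsets of the singular fiber that, as announced in the introduction, replace the homogeneity argument.

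The hard part will be the irreducibility of the components in the $D_4^1$ case. Because $g = x^2 + y^2z + yz^2 + xyz$ is not weighted homogeneous, one cannot simply invoke the fibration structure used in \cite{M2}; instead one must parametrize the relevant stratum of jets explicitly, control which of its defining equations are independent, and compute its codimension directly, then compare with the dimension of a candidate irreducible model to conclude. Managing the interaction of the $xyz$ term with the $x^2$ term (which in characteristic $2$ contributes nothing in odd degrees) across all residues of $m$ mod small integers is where the "more elaborate analysis" flagged in the abstract is concentrated, and I expect the bulk of the work — and the need for the bound $m \ge 5$ — to live there. The $A_n$-style bookkeeping for the leading-order case analysis and the containments $Z_m^i \subseteq S_m^0$ are routine by comparison.
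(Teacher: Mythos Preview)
Your plan is essentially the paper's own strategy: split $S_m^0$ according to the leading orders of $y$ and $z$ (the paper does this via the open/closed decomposition $\mathbf{D}(y_1)\cup\mathbf{D}(z_1)\cup\mathbf{V}(y_1,z_1)$ and the factorization $f^{(3)}\equiv y_1z_1(y_1+z_1)$), handle $Z_m^1,Z_m^2,Z_m^3$ by exhibiting each as the graph of a sequence of eliminations over a localization, and for $Z_m^0$ use weighted homogeneity and the isomorphism $Z_m^0\cong\mathbb{A}^{11}\times S_{m-6}$ in the $D_4^0$ case, while in the $D_4^1$ case replace this by a codimension count on strata indexed by the residue of $m$ modulo $6$ (the paper's Lemma~\ref{reduction of g^{(l)} modulo L_{pqr}}, Lemma~\ref{open irreducible and codimension}, and Proposition~\ref{center irreducible components of D_4^1}). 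The only substantive ingredients your outline leaves implicit are the symmetry automorphisms $\varphi_1,\varphi_2$ that reduce three of the components to one computation, and the specific device of isolating in each $G_{pqr}^{(l)}$ the monomial $y_{l-2r}z_r^2$ (or $y_q^2 z_{l-2q}$) carrying the highest-index $y$ (or $z$), which is what makes the elimination and hence the primeness/codimension go through; once you supply these, your plan becomes the paper's proof.
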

Second,
we describe the inclusion relations between the intersections of irreducible components of the singular fiber.
	\begin{Thm}\label{main theorem2}
		Let $k$ be an algebraically closed field of characteristic $2$,
		$S \subset \mathbb{A}^3$ the surface defined by $f = x^2+y^2z+yz^2$ or $g = x^2+y^2z+yz^2+xyz$ in the affine space over $k$,
		$Z_m^0,...,Z_m^3$ the irreducible components of the singular fiber $S_m^0$ as in Definition \ref{既約成分の定義イデアルD_4^0} or Definition \ref{definition of the irreducible components of singular fiber of D_4^1}.
		\begin{itemize}
			\item[(a)]
				For $0 \leq i < j \leq 3$,
				$Z_m^i \cap Z_m^j \subsetneq Z_m^0$.
			\item[(b)]
				For $1 \leq i, j \leq 3$ with $i \neq j$,
				$Z_m^0 \cap Z_m^i \not\subseteq Z_m^0 \cap Z_m^j$.
			\item[(c)]
				For $1 \leq i, j \leq 3$ with $i \neq j$,
				$Z_m^i \cap Z_m^j \subsetneq Z_m^0 \cap Z_m^i$.
			\item[(d)]
				For $1 \leq i < j \leq 3$ and $1 \leq l \leq 3$,
				$Z_m^0 \cap Z_m^l \not\subseteq Z_m^i \cap Z_m^j$.
		\end{itemize}
	In particular,
	for $0 \leq i<j \leq 3$,
	$Z_m^i \cap Z_m^j$ is maximal in  $\{Z_m^i \cap Z_m^j \mid i, j \in \{0,1,2,3\}, i \neq j\}$ with respect to the inclusion relation if and only if $(i,j) = (0,1), (0,2), (0,3)$.
	\end{Thm}
The next step would be the case of type $D_n$ with $n \geq 5$,
but the calculation becomes more and more difficult.
For instance,
for $n = 5$,
the author could calculate the irreducible components of $S_m^0$ only for $m \leq 8$ in characteristic $0$ using Macaulay2 on a personal computer.
In characteristic $2$,
the calculations become somewhat simpler,
and we hope that the case of characteristic $2$ will give some insight into the characteristic $0$ case.

The organization of this paper is as follows.
In section $2$,
we fix some notations on jet schemes and rational double points in characteristic $2$.
In section $3$ and section $4$,
we give a description of the defining ideals of the irreducible components of the singular fibers of the jet schemes of type $D_4^0$- and $D_4^1$-singularities,
and we study the intersections of the irreducible components to determine the inclusion relations between them.
Then we see that the graphs constructed as in Construction \ref{graph constructed by the singular fiber} are isomorphic to the resolution graph of the singularity for a sufficiently large $m$.

\textrm{\bf{Acknowledgement.}}
The author would like to thank Nobuyoshi Takahashi for valuable advice.
The author would also like to thank the referees of the previous version for careful reading and for valuable advice.
%
%
%
%
%
%
%
%
%
%
%
%
\section{Preliminaries}
In this section,
we recall the definition of the jet schemes and the defining equations of rational double points in $\mathbb{A}^3$ whose resolution graphs are of type $D_4$,
and fix some notations.
\medskip

First of all,
we recall the definition of the jet schemes.
For more details on jet schemes,
refer to \cite{Is2}.
Let $k$ be an algebraically closed field of an arbitrary characteristic,
$X$ a scheme of finite type over $k$ and $m$ a nonnegative integer.
We consider the following functor $F_m^X$.
Let $\mathfrak{Sch}/k$ be the category of schemes over $k$ and $\mathfrak{Set}$ the category of sets.
The functor $F_m^X$ is given by
	\begin{center}
	$F_m^X : \mathfrak{Sch}/k \rightarrow \mathfrak{Set} ; Z \mapsto \mathrm{Hom}_k (Z \times_{\Spec k} \Spec k[t]/\langle t^{m+1} \rangle, X)$.
	\end{center}
The functor $F_m^X$ is representable,
and the object representing $F_m^X$ will be denoted by $X_m$ (\cite[Theorem 2.2]{Is2}, \cite[Proposition 2.2]{Is1}).
	\begin{Def}
		The scheme $X_m$ is called the $m$-th \emph{jet scheme} of $X$.
	\end{Def}
We are interested in a neighborhood of two dimensional isolated hypersurface singularity,
so we consider an affine scheme embedded in $\mathbb{A}^3$ as the target space.
In this case,
we have the following explicit description.
Let us consider a scheme $X$ which is embedded as a hypersurface in an affine space $\mathbb{A}^3$.
Then the affine coordinate ring $\Gamma(X,\mathcal{O}_X)$ of $X$ can be written in the form $k[x,y,z]/\langle f \rangle$.
We introduce some notations.
	\begin{Nota}\label{calculation of jet schemes}
		Let $R_m := k[x_0,...,x_m,y_0,...,y_m,z_0,...,z_m]$,
		$\mathbf{x} := x_0 + x_1t + \cdots + x_mt^m$,
		$\mathbf{y} := y_0 + y_1t + \cdots + y_mt^m$,
		$\mathbf{z} := z_0 + z_1t + \cdots + z_mt^m$
		$\in R_m[t]/\langle t^{m+1} \rangle$.
		For a polynomial $f \in k[x,y,z]$,
		we expand $f(\mathbf{x},\mathbf{y},\mathbf{z})$ as
			\begin{center}
				$f(\mathbf{x},\mathbf{y},\mathbf{z}) = f^{(0)} + f^{(1)}t + \cdots + f^{(m)}t^m$
			\end{center}
		in $R_m[t]/\langle t^{m+1} \rangle$,
		where $f^{(j)} \in R_m$.
		Then the $m$-th jet scheme $X_m$ can be written as
			\begin{center}
				$X_m = \Spec R_m/\langle f^{(0)},...,f^{(m)} \rangle$.
			\end{center}
		For a closed point $\gamma = (a_0,...,a_m,b_0,...,b_m,c_0,...,c_m) \in \mathbb{A}^{3(m+1)} = \Spec R_m$,
		we also write $\gamma = (\sum_{i=0}^m a_it^i,\sum_{i=0}^m b_it^i,\sum_{i=0}^m c_it^i)$.
	\end{Nota}
	\begin{Rem}\label{X cong X_0}
		In this notation,
		$X_0 = \Spec R_0/\langle f^{(0)} \rangle = \Spec k[x_0,y_0,z_0]/\langle f^{(0)} \rangle$.
		Thus,
		$X_0 \cong X$ and so we identify $X_0$ with $X$ in the following.
	\end{Rem}
We note the following fact.
	\begin{Rem}\label{weight of variables}
		If we give the weight $i$ to the variables $x_i, y_i, z_i$ for $0 \leq i \leq m$,
		then the polynomial $f^{(n)}$ is homogenous of degree $n$ for $0 \leq n \leq m$.
		Indeed,
		each term of $\mathbf{x}, \mathbf{y}$ or $\mathbf{z}$ has the same degree in $x_i$, $y_i$, $z_i$ and in $t$,
		and so the same holds for $f(\mathbf{x}, \mathbf{y}, \mathbf{z})$.
		Moreover,
		$f^{(n)}$ is the coefficient of $t^n$ in $f(\mathbf{x}, \mathbf{y}, \mathbf{z})$,
		hence the claim.
	\end{Rem}
In the following sections,
we consider reductions of the polynomials $f^{(i)}$ modulo the ideals generated by $x_j$'s,
$y_j$'s and $z_j$'s for different ranges of $j$.
Hence we set up the following notation.
	\begin{Nota}\label{modulo polynomials}
		Let $m,p,q,r \in \mathbb{Z}_{>0}$.
		For $p,q,r \leq m+1$,
		we set
			\begin{center}
				$L_{pqr} = \langle x_0,...,x_{p-1},y_0,...,y_{q-1},z_0,...,z_{r-1} \rangle \subset R_m$.
			\end{center}
		Moreover,
		let $\mathbf{x}_p = x_pt^p + x_{p+1}t^{p+1} + \cdots + x_mt^m$,
		$\mathbf{y}_q = y_qt^q + y_{q+1}t^{q+1} + \cdots + y_mt^m$ and $\mathbf{z}_r = z_rt^r + z_{r+1}t^{r+1} + \cdots + z_mt^m$.
		(For $p > m$,
		$q > m$ or $r > m$,
		we think of the right hand sides as $0$.)
		For a polynomial $f \in k[x,y,z]$,
		we expand $f(\mathbf{x}_p,\mathbf{y}_q,\mathbf{z}_r)$ as
			\begin{center}
				$f(\mathbf{x}_p,\mathbf{y}_q,\mathbf{z}_r) = f^{(0)}_{pqr} + f^{(1)}_{pqr}t + \cdots + f^{(m)}_{pqr}t^m$
			\end{center}
		in $R_m[t]/\langle t^{m+1} \rangle$,
		where $f^{(j)}_{pqr} \in R_m$.
		Clearly,
		$f^{(j)}_{pqr} \in k[x_p,...,x_m,y_q,...,y_m,z_r,...,z_m]$ holds.
	\end{Nota}
	\begin{Rem}\label{座標関数を含むイデアルの簡略化}
		For $0 \leq j \leq m$,
		we have
			\begin{center}
				$f^{(j)} \equiv f^{(j)}_{pqr} \mod L_{pqr}$.
			\end{center}
		In particular,
			\begin{center}
				$L_{pqr}+\langle f^{(0)},...,f^{(m)} \rangle = L_{pqr} + \langle f_{pqr}^{(0)},...,f_{pqr}^{(m)} \rangle$.
			\end{center}
	\end{Rem}

Next,
we define the truncation morphisms.
Let $m, m' \in \mathbb{Z}_{\geq 0}$ with $m \geq m'$.
	\begin{Def}\label{definition of truncation morpshism}
		The \emph{truncation morphism}
			\begin{center}
				$\pi_{m,m'}^X : X_m \rightarrow X_{m'}$
			\end{center}
		is defined as the morphism induced by the natural morphism $\Spec k[t]/\langle t^{m'+1} \rangle \rightarrow \Spec k[t]/\langle t^{m+1} \rangle$.
		We write
			\begin{center}
				$\varpi_{m,m'}^X : \Gamma(X_{m'},\mathcal{O}_{X_{m'}}) \rightarrow \Gamma(X_m,\mathcal{O}_{X_m})$
			\end{center}
		for the corresponding ring homomorphism.
		When $X = \mathbb{A}^3$,
		we use $\pi_{m,m'}$ (resp. $\varpi_{m,m'}$) instead of $\pi_{m,m'}^{\mathbb{A}^3}$ (resp. $\varpi_{m,m'}^{\mathbb{A}^3}$).
		We write $\pi_m^X$ (resp. $\varpi_{m}^X$) for $\pi_{m,0}^X$ (resp. $\varpi_{m,0}^X$) and regard it as a morphism from $X_m$ to $X$ (resp. $\Gamma(X,\mathcal{O}_{X})$ to $\Gamma(X_m,\mathcal{O}_{X_m})$).
	\end{Def}
We are interested in the fiber of the truncation morphism at a singular point,
so we introduce the following term.
	\begin{Def}
		Let $X \subseteq \mathbb{A}^3$ be a surface with an isolated singular point at the origin $0$ and $m$ a positive integer.
		The fiber $\pi_{m}^{-1}(0)$ of the truncation morphism at the singular point is called the \emph{singular fiber} and is denoted by $X_m^0$.
	\end{Def}

The following remark explains the relation between ideals in $\Gamma(X_m,\mathcal{O}_{X_m})$ and $\Gamma((\mathbb{A}^3)_m,\mathcal{O}_{(\mathbb{A}^3)_m})$.
	\begin{Rem}\label{relations between Gamma(X_m) and affine coordinate}
		Recall that
			\begin{center}
				$R_m := \Gamma((\mathbb{A}^3)_m,\mathcal{O}_{(\mathbb{A}^3)_m}) = k[x_0,...,x_m,y_0,...,y_m,z_0,...,z_m]$.
			\end{center}
		Let $X$ be $\mathbf{V}(f)$,
		$i_m : X_m \rightarrow (\mathbb{A}^3)_m$ the natural inclusion and $\iota_m : R_m \rightarrow \Gamma(X_m,\mathcal{O}_{X_m})$ the corresponding ring homomorphism.
		For any $I \subset R_m$ with $I \supset \langle f^{(0)},...,f^{(m)} \rangle$,
		we set $\tilde{I} := \iota_m(I)$.
		Then,
		$i_m(\mathbf{V}(\tilde{I})) = \mathbf{V}(I)$ clearly holds.
		Under this inclusion morphism $i_m$,
		$X_m$ and its closed subschemes are identified with closed subschemes in $(\mathbb{A}^3)_m$.
		
		We can describe the inverse images by the truncation morphism are as follows:
		Suppose $m \geq m'$ and let $Z \subseteq X_{m'}$ be a closed subscheme defined by $\tilde{I} \subset \Gamma(X_{m'},\mathcal{O}_{X_{m'}})$,
		$I \subset R_{m'}$ an ideal with $I \supset \langle f^{(0)},...,f^{(m')} \rangle$ and $\iota_{m'}(I) = \tilde{I}$.
		Then a defining ideal of $(\pi_{m,m'}^X)^{-1}(Z) = (\pi_{m,m'}^X)^{-1}(\mathbf{V}(I))$ is $\varpi_{m,m'}^X(\tilde{I}) \cdot \Gamma(X_m, \mathcal{O}_{X_m})$ and this ideal satisfies
			\begin{alignat*}{5}
				\varpi_{m,m'}^X(\tilde{I})\cdot \Gamma(X_{m},\mathcal{O}_{X_m}) 	&= \iota_m(\varpi_{m,m'}(I)\cdot R_{m} +\langle f^{(m'+1)},f^{(m'+2)},...,f^{(m)} \rangle).
			\end{alignat*}
		Hence,
		under the above identification,
		the defining ideal of $(\pi_{m,m'}^X)^{-1}(Z) \subseteq (\mathbb{A}^3)_m$ is $\varpi_{m,m'}(I)\cdot R_m +\langle f^{(m'+1)},f^{(m'+2)},...,f^{(m)} \rangle$ in $R_m$.
	\end{Rem}
Throughout this paper,
we identify $X_m$ and its closed subschemes with the corresponding subschemes in $(\mathbb{A}^3)_m$ by $i_m$.

The following proposition and remark describe the inverse images of the smooth locus by the truncation morphism.
	\begin{Prop}[{\cite[Proposition 2.4]{Is2}}]\label{非特異部分の切り詰め射による逆像}
		Let $X, Y$ be schemes over $k$.
		If $f : X \rightarrow Y$ is an \'{e}tale morphism,
		then $X_m \cong Y_m \times_Y X$ for every $m \in \mathbb{Z}_{\geq 0}$.
	\end{Prop}
	\begin{Rem}\label{非特異部分の逆像}
		If $X$ is a $n$-dimensional variety,
		then $\pi_m : (X_{\mathrm{sm}})_m \rightarrow X_{\mathrm{sm}}$ is a locally trivial fibration with the fiber $\mathbb{A}^{nm}$
		by Proposition \ref{非特異部分の切り詰め射による逆像},
		where $X_{\mathrm{sm}} = X - \Sing X$.
		Hence $\overline{\pi_m^{-1}(X_{\mathrm{sm}})}$ is an irreducible component of $X_m$,
		and we call it the main component.
	\end{Rem}
In general,
the $m$-th jet scheme $X_m$ is not irreducible.
However,
if $X$ has only rational double points,
then $X_m$ is irreducible and consists of the main component (\cite[Corollary 10.2.9]{Is}).

Next,
we recall the defining equations of singularities in $\mathbb{A}^3$ whose resolution graphs are of type $D_4$.
In positive characteristic other than $2$,
a surface singularity of type $D_4$ can be defined by $x^2 - y^2z+z^3$,
which is the same as in the case of characteristic $0$.
On the other hand,
in characteristic $2$,
there are two singularities (\cite[Section 3]{Ar}):
a singularity of type $D_4^0$,
defined by
	\begin{alignat*}{5}
		f &= x^2 + y^2z + yz^2,
	\end{alignat*}
and a singularity of type $D_4^1$,
defined by
	\begin{align*}
		g &= x^2 + y^2z + yz^2 + xyz.
	\end{align*}
In characteristic different from $2$,
they both give a singularity of type $D_4$.

To conclude this section,
we give the lemmas that will be used in the following sections.

One key point in the description of the singular fibers in \cite{M2} was that $f^{(j)}$ is often of the form $Ay_i + B$ (resp. $Az_i + B$) where $A$ and $B$ do not contain $y_l$ (resp. $z_l$) with $l \geq i$.
Hence we set up the following notation.
	\begin{Nota}\label{Leading term for variables}
		Let $h \in R_m$.
		The sum of the terms in $h$ containing $y_i$ (resp. $z_i$) with the largest index $i$ is denoted by $T_y(h)$ (resp. $T_z(h)$).
	\end{Nota}
	\begin{Ex}
		If $h = x_3^2 + y_2^2z_2 + y_2z_2^2$,
		then $T_y(h) = y_2^2z_2 + y_2z_2^2$.
		If $h = x_3^2 + y_2^2z_2 + y_2z_2^2 + y_4z_1^2$,
		then $T_y(h) = y_4z_1^2$.
	\end{Ex}
For any polynomials $f$ and $g$,
we have the following.
	\begin{Lemma}	\label{reduction of f^{(l)} g^{(l)} modulo L_{pqr}}
		Let $k$ be a field of characteristic $2$ and $f, g \in k[x,y,z]$ be defined by $f = x^2+y^2z+yz^2$ and $g = x^2+y^2z+yz^2+xyz$.
		Assume $p,q,r \in \mathbb{Z}_{>0}$ and $l \in \mathbb{Z}_{\geq 0}$ with $l \leq m$.
		Then we have
			\begin{center}
				$\displaystyle f_{pqr}^{(l)} = \sum_{u\geq p, 2u=l} x_u^2 + \sum_{v\geq q, w\geq r, 2v+w=l} y_v^2z_w + \sum_{v\geq q, w\geq r, v+2w=l} y_vz_w^2$
			\end{center}
and
			\begin{center}
				$\displaystyle g_{pqr}^{(l)} = \sum_{u\geq p, 2u=l} x_u^2 + \sum_{v\geq q, w\geq r, 2v+w=l} y_v^2z_w + \sum_{v\geq q, w\geq r, v+2w=l} y_vz_w^2 + \sum_{u\geq p, v\geq q, w\geq r, u+v+w=l} x_uy_vz_w$.
			\end{center}
		Here,
		if there are no $u,v$ and $w$ satisfying the conditions,
		we regard the sums as $0$.
		Furthermore,
		the following hold.
			\begin{enumerate}
				\item[(a)]
					If $l < 2p$,
					$l < 2q + r$ and $l < q+2r$,
					then we have
						\begin{center}
							$f_{pqr}^{(l)} = g_{pqr}^{(l)} = 0$.
						\end{center}
				\item[(b)]
					If $l = 2p$,
					$l < 2q + r$ and $l < q+2r$,
					then we have
						\begin{center}
							$f_{pqr}^{(l)} = g_{pqr}^{(l)} = x_p^2$.
						\end{center}
				\item[(c)]
					If $(p >) q=r$ and $l = 2p = 3q$,
					then
						\begin{center}
							$f_{pqq}^{(l)} = g_{pqq}^{(l)} = x_p^2+y_q^2z_q+y_qz_q^2$.
						\end{center}
				\item[(d)]
					If $(p >) q = r$ and $l = 3q < 2p$,
					then
						\begin{center}
							$f_{pqq}^{(l)} = g_{pqq}^{(l)} = y_q^2z_q + y_qz_q^2 = y_qz_q(y_q+z_q)$.
						\end{center}
				\item[(e)]
					If $p \geq q > r$ and $l \geq 2p = q+2r$,
					then
						\begin{center}
							$T_y(f_{pqr}^{(l)}) = T_y(g_{pqr}^{(l)}) = y_{l-2r}z_r^2$.
						\end{center}
				\item[(f)]
					If $p \geq r > q$ and $l \geq 2p = 2q+r$,
					then
						\begin{center}
							$T_z(f_{pqr}^{(l)}) = T_z(g_{pqr}^{(l)}) = y_q^2z_{l-2q}$.
						\end{center}
				\item[(g)]
					If $p > q=r$ and $l > 3q$,
					then
						\begin{center}
							$T_y(f_{pqq}^{(l)}) = T_y(g_{pqq}^{(l)}) = y_{l-2q}z_q^2$.
						\end{center}
				\item[(h)]
					If $p > q=r$ and $l > 3q$,
					then
						\begin{center}
							$T_z(f_{pqq}^{(l)}) = T_z(g_{pqq}^{(l)}) = y_q^2z_{l-2q}$.
						\end{center}
			\end{enumerate}
	\end{Lemma}
	\begin{Rem}\label{remark of the difference between f and g}
		We note that if $p,q,r$ satisfy the above conditions (a)--(d),
		then there are no terms coming from $xyz$ in $g^{(l)}_{pqr}$.
		In particular,
		the polynomial $g_{pqq}^{(2p)} = x_p^2+y_q^2z_q+y_qz_q^2$ appearing in (c) has the same form as the defining equation $f$ of a singular surface of type $D_4^0$.
		Since the $D_4^0$-type singular surface is irreducible,
		$g_{pqq}^{(2p)}$ is also irreducible in $k[x_p,y_q,z_q]$.
		Moreover,
		if $p,q,r$ satisfy the above conditions (e)--(h),
		then there are no terms coming from $xyz$ in $T_y(g^{(l)}_{pqr})$ and $T_z(g_{pqr}^{(l)})$.
	\end{Rem}
	\begin{proof}
		First of all,
		we note that
			\begin{align*}
				\mathbf{x}_p^2	&= (x_pt^p + x_{p+1}t^{p+1} + \cdots + x_mt^m)^2\\
								&= x_p^2t^{2p} + x_{p+1}^2t^{2(p+1)} + \cdots + x_m^2t^{2m},
			\end{align*}
		and similarly for $\mathbf{y}_q^2$ and $\mathbf{z}_r^2$,
		since we are working in characteristic $2$.
		
		Since $f_{pqr}^{(l)}$ and $g_{pqr}^{(l)}$ are the coefficient of $t^l$ in the expansion of $f(\mathbf{x}_p, \mathbf{y}_q, \mathbf{z}_r) = \mathbf{x}_p^2 + \mathbf{y}_q^2\mathbf{z}_r + \mathbf{y}_q\mathbf{z}_r^2$ and $g(\mathbf{x}_p, \mathbf{y}_q, \mathbf{z}_r) = \mathbf{x}_p^2 + \mathbf{y}_q^2\mathbf{z}_r + \mathbf{y}_q\mathbf{z}_r^2 + \mathbf{x}_p\mathbf{y}_q\mathbf{z}_r$,
		we obtain
			\begin{center}
				$\displaystyle f_{pqr}^{(l)} = \sum_{u\geq p, 2u=l} x_u^2 + \sum_{v\geq q, w\geq r, 2v+w=l} y_v^2z_w + \sum_{v\geq q, w\geq r, v+2w=l} y_vz_w^2$
			\end{center}
		and
			\begin{center}
				$\displaystyle g_{pqr}^{(l)} = \sum_{u\geq p, 2u=l} x_u^2 + \sum_{v\geq q, w\geq r, 2v+w=l} y_v^2z_w + \sum_{v\geq q, w\geq r, v+2w=l} y_vz_w^2 + \sum_{u\geq p, v\geq q, w\geq r, u+v+w=l} x_uy_vz_w$
			\end{center}
		by a direct calculation.
		
		Before we begin the proof of (a)--(h),
		we note that the difference between $f^{(l)}$ and $g^{(l)}$ is the terms coming from $xyz$.
		As we saw in Remark \ref{remark of the difference between f and g},
		the results of Lemma \ref{reduction of f^{(l)} g^{(l)} modulo L_{pqr}} are the same for $f$ and $g$.
		Noting that every term in $(xyz)^{(l)}$ is of the form $x_u^iy_v^jz_w^k$ with $i,j,k > 0$ and that $f^{(l)}$ contains no such terms,
		it suffices to prove Lemma \ref{reduction of f^{(l)} g^{(l)} modulo L_{pqr}} for $g$.
				
		Let us prove (a),
		(b),
		(c) and (d).
		Note,
		first of all,
		that the lowest exponent of $t$ appearing in $\mathbf{x}_p^2$ (resp. $\mathbf{y}_q^2\mathbf{z}_r$,
		$\mathbf{y}_q\mathbf{z}_r^2$,
		$\mathbf{x}_p\mathbf{y}_q\mathbf{z}_r$) is $2p$ (resp. $2q+r$, $q+2r$, $p+q+r$).
		
		For (a),
		we only have to show that the lowest exponent of $t$ in $g(\mathbf{x}_p, \mathbf{y}_q, \mathbf{z}_r)$ is greater than $l$.
		By the conditions in (a),
		we have $l < 2p$,
		$l < 2q+r$ and $l < q+2r$,
		so we only have to check $l < p+q+r$.
		From the assumption,
		we have $p > l/2$ and $q+r > 2l/3$,
		and then that $p+q+r > 7l/6 > l$.
		Thus $g_{pqr}^{(l)} = 0$ holds.
		
		For (b),
		again we only have to show that $p+q+r > l$.
		By assumption in (b) $p = l/2$ and $q+r > 2l/3$,
		so $p+q+r > 7l/6 > l$.
		
		For (c),
		we have $l = 2p = 3q$ by assumption and $\mathbf{x}_p^2$,
		$\mathbf{y}_q^2\mathbf{z}_q$ and $\mathbf{y}_q\mathbf{z}_q^2$ give rise to terms $x_p^2$,
		$y_q^2z_q$ and $y_qz_q^2$,
		so we only have to show that $p+q+q > l$.
		From the assumption $p > q$,
		so $l = 3q < p+q+q$.
		Hence $g_{pqq}^{(l)} = x_p^2 + y_q^2z_q + y_qz_q^2$.
		
		For (d),
		we have $2p > l$ and $p>q$ by assumption,
		so $p+q+q > 3q = l$.
		Therefore,
		$g_{pqq}^{(3q)} = y_q^2z_q+y_qz_q^2 = y_qz_q(y_q+z_q)$ holds.
		
		Before proving the statements (e)--(h),
		let us find out the terms containing $y_i$ (resp. $z_i$) with the largest $i$ in $(\mathbf{y}_q^2\mathbf{z}_r)^{(l)}$,
		$(\mathbf{y}_q\mathbf{z}_r^2)^{(l)}$ and $(\mathbf{x}_p\mathbf{y}_q\mathbf{z}_r)^{(l)}$,
		where we write the coefficient of $t^l$ in $\mathbf{y}_q^2\mathbf{z}_r$ as $(\mathbf{y}_q^2\mathbf{z}_r)^{(l)}$,
		and so on.
		
		First,
		we look at $T_y((\mathbf{y}_q^2\mathbf{z}_r)^{(l)})$ (resp. $T_z((\mathbf{y}_q\mathbf{z}_r^2)^{(l)}))$ (see Notation \ref{Leading term for variables}).
		If $l < 2q+r$ (resp. $l < q+2r$),
		it is obvious that $(\mathbf{y}_q^2\mathbf{z}_r)^{(l)} = 0$ (resp. $(\mathbf{y}_q\mathbf{z}_r^2)^{(l)} = 0$),
		so we assume $l \geq 2q+r$ (resp. $l \geq q+2r$).
		If $l-r$ (resp. $l-q$) is even,
		then
			\begin{center}
				$T_y((\mathbf{y}_q^2\mathbf{z}_r)^{(l)}) = y_{\frac{l-r}{2}}^2z_r$ \ (resp. $T_z((\mathbf{y}_q\mathbf{z}_r^2)^{(l)}) = y_qz_{\frac{l-q}{2}}^2$),
			\end{center}
		and if $l-r$ (resp. $l-q$) is odd,
		then
			\begin{center}
				$T_y((\mathbf{y}_q^2\mathbf{z}_r)^{(l)}) = y_{\frac{l-r-1}{2}}^2z_{r+1}$ \ (resp. $T_z((\mathbf{y}_q\mathbf{z}_r^2)^{(l)}) = y_{q+1}z_{\frac{l-q-1}{2}}^2$).
				\end{center}
		Second,
		assuming $l \geq q+2r$ (resp. $l \geq 2q+r$),
			\begin{center}
				$T_y((\mathbf{y}_q\mathbf{z}_r^2)^{(l)}) = y_{l-2r}z_r^2$ \ (resp. $T_z((\mathbf{y}_q^2\mathbf{z}_r)^{(l)}) = y_q^2z_{l-2q}$).
			\end{center}
		Third,
		if $l \geq p+q+r$,
		then
			\begin{center}
				$T_y((\mathbf{x}_p\mathbf{y}_q\mathbf{z}_r)^{(l)}) = x_py_{l-p-r}z_r$ \ (resp. $T_z((\mathbf{x}_p\mathbf{y}_q\mathbf{z}_r)^{(l)}) = x_py_qz_{l-p-q}$),
			\end{center}
		and if $l < p+q+r$,
		then $(\mathbf{x}_p\mathbf{y}_q\mathbf{z}_r)^{(l)} = 0$.
			
		Now,
		we prove (e) (resp. (g)).
		Since the terms $T_y((\mathbf{y}_q^2\mathbf{z}_r)^{(l)})$,
		$T_y((\mathbf{y}_q\mathbf{z}_r^2)^{(l)})$ and $T_y((\mathbf{x}_p\mathbf{y}_q\mathbf{z}_r)^{(l)})$ are as above,
		we only have to show that $l-2r > (l-r)/2$ and $l-2r > l-p-r$.
		First,
		we prove $l-2r > (l-r)/2$.
		By the assumption $q > r$ and $l \geq q+2r$ (resp. $l > 3q = 3r$),
		so $l > 3r$ and this is equivalent to $l-2r > (l-r)/2$.
		Moreover,
		$p > r$ by the assumption.
		So $l-2r > l-p-r$.
		Thus,
		$T_y(g_{pqr}^{(l)}) = y_{l-2r}z_{r}^2$ (resp. $T_y(g_{pqq}^{(l)}) = y_{l-2q}z_{q}^2 = y_{l-2r}z_{r}^2$).
		
		By symmetry,
		we also have (f) and (h).
	\end{proof}
Focusing on $g$,
we have the following lemma.
	\begin{Lemma}\label{modulo prop focus on g}
		Assume $p,q,r \in \mathbb{Z}_{>0}$ and $l \in \mathbb{Z}_{\geq 0}$ with $l \leq m$.
			\begin{itemize}
				\item[(a,g)]
					If $l \notin 2\mathbb{Z}$ and $l < 2q+r$, $l < q+2r$ and $l < p+q+r$,
					then we have
						\begin{center}
							$g_{pqr}^{(l)} = 0$.
						\end{center}
				\item[(b,g)]
					If $l = 2p'$ with $p \leq p'$,
					$l < 2q+r$,
					$l < q+2r$ and $l < p+q+r$,
					then we have
						\begin{center}
							$g_{pqr}^{(l)} = x_{p'}^2$.
						\end{center}
			\end{itemize}
	\end{Lemma}
The proof is basically the same as that of Lemma \ref{reduction of f^{(l)} g^{(l)} modulo L_{pqr}}(a) and (b).

The next lemma is one of the key points in proving the irreducibility of the certain closed subsets of the singular fiber.
	\begin{Lemma}\label{codimension conditions}
		Let $S$ be the surface defined by $f$ or $g$,
		$S_m$ the $m$-th jet scheme of $S$ and $S_m^0$ the singular fiber for $m \geq 1$.
		\begin{itemize}
			\item[(a)]
				If $Z \subseteq S_m$ is an irreducible component,
				then
				$\mathrm{codim}_{(\mathbb{A}^3)_m}\ Z \leq m+1$ (or equivaelntly $\mathrm{dim}\ Z \geq 2m+2$).
			\item[(b)]
				If $Z \subseteq S_m^0$ is an irreducible component,
				then
				$\mathrm{codim}_{(\mathbb{A}^3)_m}\ Z \leq m+2$ (or equivaelntly $\mathrm{dim}\ Z \geq 2m+1$).
		\end{itemize}
	\end{Lemma}
\begin{proof}
(a)
The $m$-th jet scheme $S_m$ is defined by the ideal generated by $m+1$ elements $\{ f^{(0)},...,f^{(m)} \}$,
so for any irreducible component $Z \subseteq S_m$,
	\begin{center}
		$\mathrm{codim}_{(\mathbb{A}^3)_m}\ Z \leq m+1$.
	\end{center}
by Krull's height theorem.\\
(b)
The singular fiber $S_m^0$ is defined by
	\begin{center}
		$L_{111} + \langle f^{(0)},...,f^{(m)} \rangle = L_{111} + \langle f_{111}^{(0)},...,f_{111}^{(m)} \rangle$\\
		(resp. $L_{111} + \langle g^{(0)},...,g^{(m)} \rangle = L_{111} + \langle g_{111}^{(0)},...,g_{111}^{(m)} \rangle$),
	\end{center}
from Remark \ref{relations between Gamma(X_m) and affine coordinate} and Remark \ref{座標関数を含むイデアルの簡略化}.
By Lemma \ref{reduction of f^{(l)} g^{(l)} modulo L_{pqr}}(a),
we have
	\begin{center}
		$f^{(0)}_{111} = f^{(1)}_{111} = 0$ (resp. $g^{(0)}_{111} = g^{(1)}_{111} = 0$)
	\end{center}
and $S_m^0$ is defined by the ideal generated by $3+m+1-2 = m+2$ elements.
Thus as in (a),
for any irreducible component $Z \subseteq S_m^0$,
	\begin{center}
		$\mathrm{codim}_{(\mathbb{A}^3)_m}\ Z \leq m+2$.
	\end{center}
\end{proof}
%
%
%
%
%
%
%
%
%
\section{Jet schemes of a singular surface of type $D_4^0$}
In this section,
we deal with a singular surface of type $D_4^0$.
First,
we find the irreducible decomposition of the singular fiber.
This can be done by the method of Mourtada \cite{M2}.
Next,
we determine the inclusion relations among intersections of irreducible components of the singular fiber.
\medskip

For the determination of the irreducible decomposition in the $D_4^0$ case,
the arguments are almost the same as in characteristic $0$.
	\begin{Rem}
	In positive characteristic not equal to $2$,
	a singular surface of type $D_4$ is defined by $h = x^2 - y^2z+z^3$ in $\mathbb{A}^3$.
	We consider the transformation
		\[
			\begin{cases}
			x \mapsto x, \\
			y \mapsto \frac{1}{\sqrt[6]{4}}y + \frac{\sqrt[3]{4}}{2}z, \\
			z \mapsto -\frac{1}{\sqrt[6]{4}}y + \frac{\sqrt[3]{4}}{2}z.
			\end{cases}
		\]
	Then the polynomial $f = x^2+y^2z+yz^2$ is mapped to $h$.
	\end{Rem}

Let $S = \mathbf{V}(f) \subseteq \mathbb{A}^3$ be a singular surface of type $D_4^0$ over an algebraically closed field $k$ of characteristic $2$,
$S_m$ the $m$-th jet scheme of $S$,
$S_m^0$ the singular fiber of $S_m$ and $R_m = k[x_0,...,x_m,y_0,...,y_m,z_0,...,z_m]$.
Moreover,
we set
	\begin{center}
		$L_{pqr} = \langle x_0,...,x_{p-1},y_0,...,y_{q-1},z_0,...,z_{r-1} \rangle$
	\end{center}
for positive integers $p,q,r \in \mathbb{Z}_{> 0}$ with $p,q,r \leq m+1$.
	\begin{Def}
		For $m \geq 1$,
		we define the following ideals in $R_m$:
			\begin{alignat*}{11}
				J_m^1 &= L_{211} &&+ \langle y_1 \rangle &&+ \langle f^{(0)},...,f^{(m)} \rangle &&= L_{221} &&+ \langle f^{(0)},...,f^{(m)} \rangle,\\
				J_m^2 &= L_{211} &&+ \langle z_1 \rangle &&+ \langle f^{(0)},...,f^{(m)} \rangle &&= L_{212} &&+ \langle f^{(0)},...,f^{(m)} \rangle,\\
				J_m^3 &= L_{211} &&+ \langle y_1+z_1 \rangle &&+ \langle f^{(0)},...,f^{(m)} \rangle.
			\end{alignat*}
	\end{Def}
By Remark \ref{relations between Gamma(X_m) and affine coordinate} and $J_m^i \supset L_{111}$,
these ideals include the defining ideal of the singular fiber,
and hence correspond to closed subsets in the singular fiber $S_m^0$.

We have the following symmetries:
	\begin{Nota}\label{symmetric morphisms in D_4^0}
		Let $\varphi_1$ and $\varphi_2$ be the automorphisms of $R_m$ defined by
		\[
		\varphi_1 :
		\begin{cases}
			x_i \mapsto x_i, \\
			y_i \mapsto z_i, \\
			z_i \mapsto y_i,
		\end{cases}
		\varphi_2 :
		\begin{cases}
			x_i \mapsto x_i, \\
			y_i \mapsto y_i, \\
			z_i \mapsto y_i + z_i.
		\end{cases}
		\]
		These automorphisms $\varphi_1$ and $\varphi_2$ induce ring isomorphisms $(\varphi_1)_{z_1} : (R_m)_{z_1} \rightarrow (R_m)_{y_1}$ and $(\varphi_2)_{y_1} : (R_m)_{y_1} \rightarrow (R_m)_{y_1}$.
		We write the isomorphisms corresponding to $\varphi_1$, $\varphi_2$, $(\varphi_1)_{z_1}$ and $(\varphi_2)_{y_1}$ as $\psi_1$, $\psi_2$, $(\psi_1)_{z_1}$ and $(\psi_2)_{y_1}$,
		respectively.
		For simplicity,
		we write $(\varphi_1)_{z_1}$, $(\varphi_2)_{y_1}$, $(\psi_1)_{z_1}$ and $(\psi_2)_{y_1}$ as $\varphi_1$, $\varphi_2$, $\psi_1$ and $\psi_2$.
	\end{Nota}
	\begin{Lemma}\label{symmetric morphisms}
		\begin{enumerate}
		\item[(a)]
			For $m \geq 1$, $i \in \{ 0,...,m \}$ and $k = 1,2$,
			$\varphi_k$ preserve $f^{(i)}$ i.e.,
				\begin{center}
					$\varphi_k(f^{(i)}) = f^{(i)}$
				\end{center}
			in $R_m$.
			In particular,
			the morphisms $\psi_k$ preserve $S_m$.
		\item[(b)]
			For $m \geq 1$,
				\begin{center}
					$\varphi_1(J_m^1\cdot (R_m)_{z_1}) = J_m^2\cdot (R_m)_{y_1}$
				\end{center}
			and
				\begin{center}
					$\varphi_2(J_m^2\cdot (R_m)_{y_1}) = J_m^3\cdot (R_m)_{y_1}$.
				\end{center}
		\item[(c)]
			The morphisms $\varphi_1$, $\varphi_2$, $\psi_1$ and $\psi_2$ preserve the union,
			the intersection and the inclusion relations of sets.
		\end{enumerate}
	\end{Lemma}	
	\begin{proof}
		(a)
			Note that the automorphisms $\varphi_1$ and $\varphi_2$ are induced by the automorphisms of $k[x,y,z]$ defined by
			\[
				\overline{\varphi_1} :
				\begin{cases}
					x \mapsto x, \\
					y \mapsto z, \\
					z \mapsto y,
				\end{cases}
				\overline{\varphi_2} :
				\begin{cases}
				x \mapsto x, \\
				y \mapsto y, \\
				z \mapsto y + z,
				\end{cases}
			\]
			and
			$\overline{\varphi_1}(f) = x^2+z^2y+zy^2 = f$ and
				\begin{alignat*}{5}
					\overline{\varphi_2}(f)	&= x^2+y^2(y+z)+y(y+z)^2\\
											&= x^2+y(y+z)(y+y+z)\\
											&= x^2+y(y+z)z 			&= x^2+y^2z+yz^2,
				\end{alignat*}
			where $y+y = 0$ since $k$ is a field of characteristic $2$.
			Since
				\begin{alignat*}{7}
					\displaystyle f\left(\sum_{i=0}^m x_it^i, \sum_{i=0}^m y_it^i, \sum_{i=0}^m z_it^i\right)
					&= \overline{\varphi_k}(f)\left(\sum_{i=0}^m x_it^i, \sum_{i=0}^m y_it^i, \sum_{i=0}^m z_it^i\right)\\
					&= f\left(\sum_{i=0}^m \varphi_k(x_i)t^i, \sum_{i=0}^m \varphi_k(y_i)t^i, \sum_{i=0}^m \varphi_k(z_i)t^i\right)\\
					&= \sum_{i=0}^m \varphi_k(f^{(i)})t^i
				\end{alignat*}
			in $R_m[t]/\langle t^{m+1} \rangle$ for $k= 1,2$,
			$\varphi_k$ preserves the polynomials $f^{(i)}$ ($i \in \{0,...,m\}$),
			and the induced automorphism $\psi_k$ preserves $S_m$.
		\medskip\\
		(b)
			We can easily check that
				\begin{center}
					$\varphi_1((L_{211} + \langle y_1 \rangle)\cdot (R_m)_{z_1}) = (L_{211} + \langle z_1 \rangle)\cdot (R_m)_{y_1}$
				\end{center}
			and
				\begin{center}
					$\varphi_2((L_{211} + \langle z_1 \rangle)\cdot (R_m)_{y_1}) = (L_{211} + \langle y_1+z_1 \rangle)\cdot (R_m)_{y_1}$.
				\end{center}
			By the assertion (a),
			we have $\varphi_1(J_m^1\cdot (R_m)_{z_1}) = J_m^2\cdot (R_m)_{y_1}$ and $\varphi_2(J_m^2\cdot (R_m)_{y_1}) = J_m^3\cdot (R_m)_{y_1}$.
		\medskip\\
		(c)
			The morphisms
			$\varphi_1$, $\varphi_2$, $\psi_1$ and $\psi_2$ are isomorphisms,
			so under these morphisms,
			the union,
			the intersection and the inclusion relations of sets are preserved.
	\end{proof}
We will show that $J_m^1, J_m^2$ and $J_m^3$ define irreducible components of certain open subsets of $S_m^0$.
	\begin{Prop}\label{primeness of the open ideals}
	For $m \geq 3$,
	the ideal $J_m^1\cdot (R_m)_{z_1}$ is a prime ideal in $(R_m)_{z_1}$,
	and $J_m^2\cdot (R_m)_{y_1}$ and $J_m^3\cdot (R_m)_{y_1}$ are prime ideals in $(R_m)_{y_1}$.
	Moreover,
	the heights of the ideals $J_m^1\cdot (R_m)_{z_1}$,
	$J_m^2\cdot (R_m)_{y_1}$ and $J_m^3\cdot (R_m)_{y_1}$ are $m+2$.
	\end{Prop}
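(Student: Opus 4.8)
The plan is to prove all three assertions for $J_m^1\cdot(R_m)_{z_1}$ and then transport them to the other two ideals. By Remark~\ref{symmetric morphisms}(2), the maps $\varphi_1\colon(R_m)_{z_1}\to(R_m)_{y_1}$ and $\varphi_2\colon(R_m)_{y_1}\to(R_m)_{y_1}$ are isomorphisms carrying $J_m^1\cdot(R_m)_{z_1}$ to $J_m^2\cdot(R_m)_{y_1}$ and $J_m^2\cdot(R_m)_{y_1}$ to $J_m^3\cdot(R_m)_{y_1}$, so primeness and height are preserved under them; hence it suffices to treat $J_m^1\cdot(R_m)_{z_1}=\bigl(L_{221}+\langle f^{(0)},\dots,f^{(m)}\rangle\bigr)\cdot(R_m)_{z_1}$.

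First I would compute the $f^{(j)}$ modulo $L_{221}=\langle x_0,x_1,y_0,y_1,z_0\rangle$. Expanding $f(\mathbf{x}_1,\mathbf{x}_2,\mathbf{x}_3)$ with $x_1=x$, $x_2=y$, $x_3=z$ and using that $u\mapsto u^2$ is the Frobenius in characteristic $2$, one obtains
\[
f^{(j)}=\sum_{2i=j}x_i^2+\sum_{2i+k=j}y_i^2z_k+\sum_{i+2k=j}y_iz_k^2 ,
\]
and reduction modulo $L_{221}$ deletes every monomial divisible by one of $x_0,x_1,y_0,y_1,z_0$. A direct inspection then shows $f^{(0)}\equiv f^{(1)}\equiv f^{(2)}\equiv f^{(3)}\equiv 0$, while for $4\le j\le m$
\[
f^{(j)}\equiv z_1^{2}\,y_{j-2}+g_j\pmod{L_{221}},
\]
where $g_j\in k[x_2,\dots,x_m,z_1,\dots,z_m,y_2,\dots,y_{j-3}]$ (so $g_4=x_2^2$ involves no $y$-variable). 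The point is that the monomial $y_{j-2}z_1^2$ can arise only from the index $k=1$ in the third sum, and that no $y$-variable of index $\ge j-2$ other than $y_{j-2}$ occurs.

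This triangular shape is what drives the argument. Since $z_1$ is a unit in $(R_m)_{z_1}$, the relation $f^{(j)}\equiv 0$ is equivalent to $y_{j-2}=z_1^{-2}g_j$, and as $g_j$ involves only $y$-variables of index $<j-2$, running $j$ from $4$ to $m$ with back-substitution eliminates $y_2,y_3,\dots,y_{m-2}$ in turn. Consequently the composition
\[
k[x_2,\dots,x_m,y_{m-1},y_m,z_1,\dots,z_m]_{z_1}\hookrightarrow(R_m)_{z_1}\twoheadrightarrow(R_m)_{z_1}/J_m^1\cdot(R_m)_{z_1}
\]
is an isomorphism, so the quotient is a localization of a polynomial ring over $k$, hence an integral domain, and $J_m^1\cdot(R_m)_{z_1}$ is prime. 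For the height: $(R_m)_{z_1}$ has dimension $3(m+1)$ and the quotient has dimension $(m-1)+2+m=2m+1$, so the dimension formula for finitely generated $k$-algebra domains (and their localizations) gives $\mathrm{ht}\bigl(J_m^1\cdot(R_m)_{z_1}\bigr)=3(m+1)-(2m+1)=m+2$.

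The only genuine computation is the reduction of $f^{(j)}$ modulo $L_{221}$ together with the verification that $y_{j-2}$ occurs there exactly once, linearly, with the unit coefficient $z_1^2$, and that every other $y$-variable that occurs has index $<j-2$; I expect this bookkeeping to be the main obstacle. Characteristic $2$ is in fact a help here, since the Frobenius collapses the expansion of $f$ and no mixed-square terms appear; once the triangularity is in hand, primeness and the height follow formally.
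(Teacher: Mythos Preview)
Your proposal is correct and follows essentially the same approach as the paper: reduce to $J_m^1\cdot(R_m)_{z_1}$ via the symmetries in Remark~\ref{symmetric morphisms}(2), observe that modulo $L_{221}$ each $f^{(j)}$ for $4\le j\le m$ has the form $y_{j-2}z_1^2+(\text{terms with lower }y\text{-index})$, and use this triangular structure to conclude primeness and height $m+2$. The paper carries this out by splitting into even and odd $j$ and writing the remainder polynomials $F^{(2i)},F^{(2i+1)}$ explicitly, whereas you treat all $j$ uniformly and phrase the conclusion as an explicit ring isomorphism with a localized polynomial ring; these are cosmetic differences only.
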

	\begin{proof}
	We prove that the ideal $J_m^1\cdot (R_m)_{z_1}$ is prime of height $m+2$.
	By Lemma \ref{symmetric morphisms}(b),
	it follows that $J_m^2\cdot (R_m)_{y_1}$ and $J_m^3\cdot (R_m)_{y_1}$ are prime ideals of height $m+2$ in $(R_m)_{y_1}$.
	
	First,
	we note that
		\begin{center}
			$J_m^1 = L_{221} + \langle f^{(0)},...,f^{(m)} \rangle = L_{221} + \langle f_{221}^{(0)},...,f_{221}^{(m)} \rangle$
		\end{center}
	by Remark \ref{座標関数を含むイデアルの簡略化}.
	Then,
	we have
		\begin{alignat*}{7}
			f^{(0)}_{221} &= f^{(1)}_{221} = f^{(2)}_{221} = f^{(3)}_{221} = 0
		\end{alignat*}
	by Lemma \ref{reduction of f^{(l)} g^{(l)} modulo L_{pqr}}(a) and
		\begin{alignat*}{7}
			T_y(f^{(l)}_{221}) = y_{l-2}z_1^2
		\end{alignat*}
	for $4 \leq l \leq m$ by Lemma \ref{reduction of f^{(l)} g^{(l)} modulo L_{pqr}}(e).
	Hence there exists $h_l \in k[x_2,...,x_m,y_2,...,y_{l-3},z_1,...,z_m]$ such that
		\begin{center}
			$f_{221}^{(l)} = y_{l-2}z_1^2 + h_l$
		\end{center}
	for $4 \leq l \leq m$.
	Since
		\begin{center}
			$\displaystyle y_{l-2} + \frac{h_l}{z_1^2} = \frac{1}{z_1^2}f_{221}^{(l)} \in J_m^1\cdot (R_m)_{z_1}$,
		\end{center}
	we have
		\begin{alignat*}{7}
			J_m^1\cdot (R_m)_{z_1}	&= (L_{221}+\langle f^{(0)}_{221},...,f^{(m)}_{211} \rangle)\cdot(R_m)_{z_1}\\
									&= \left(L_{221}+\left\langle y_{2} + \frac{h_4}{z_1^2},...,y_{m-2} + \frac{h_m}{z_1^2} \right\rangle \right)\cdot(R_m)_{z_1}.
		\end{alignat*}
	Thus the ideal $J_m^1\cdot(R_m)_{z_1}$ is a prime ideal of $(R_m)_{z_1}$ and the height of $J_m^1\cdot(R_m)_{z_1}$ is $m+2$.
	\end{proof}

Let us define some ideals of $R_m$ and the corresponding closed subsets of $(\mathbb{A}^3)_m$.
	\begin{Def}\label{既約成分の定義イデアルD_4^0}
		For $m \geq 1$,
		we define
			\begin{alignat}{5}
				I_m^0 &= L_{222} + \langle f^{(0)},...,f^{(m)} \rangle,\\
				I_m^1 &= J_m^1\cdot(R_m)_{z_1} \cap R_m,\\
				I_m^2 &= J_m^2\cdot(R_m)_{y_1} \cap R_m,\\
				I_m^3 &= J_m^3\cdot(R_m)_{y_1} \cap R_m.
			\end{alignat}
		Furthermore,
		we define closed subsets
			\begin{align}
				Z_m^i &:= \mathbf{V}(I_m^i) \label{define the irreducible components for a D_4^0-type}
			\end{align}
		for $0 \leq i \leq 3$.
	\end{Def}

	\begin{Rem}\label{the codimension of the irreducible components}
		\begin{itemize}
			\item[(a)]
				By Proposition \ref{primeness of the open ideals},
				the ideals $I_m^1$,
				$I_m^2$ and $I_m^3$ for $m \geq 3$ are prime of height $m+2$,
				and the closed subsets $Z_m^1$,
				$Z_m^2$ and $Z_m^3$ are irreducible of codimension $m+2$ in $(\mathbb{A}^3)_m$.
			\item[(b)]
			By Definition \ref{既約成分の定義イデアルD_4^0},
			we have
				\begin{alignat*}{3}
				Z_m^1 &= \overline{\mathbf{V}(J_m^1) \cap \mathbf{D}(z_1)},\\
				Z_m^2 &= \overline{\mathbf{V}(J_m^2) \cap \mathbf{D}(y_1)},\\
				Z_m^3 &= \overline{\mathbf{V}(J_m^3) \cap \mathbf{D}(y_1)}.
				\end{alignat*}
			Moreover,
			we have $y_1+z_1 \in J_m^3$,
			hence we have $I_m^3 = J_m^3\cdot(R_m)_{y_1}\cap R_m = J_m^3\cdot(R_m)_{z_1}\cap R_m$ and therefore
				\begin{alignat*}{3}
				Z_m^3 &= \overline{\mathbf{V}(J_m^3) \cap \mathbf{D}(z_1)}.
				\end{alignat*}
			\item[(c)]
				For $m \geq 4$,
				we have $f^{(4)}_{222} = x_2^2$ by Lemma \ref{reduction of f^{(l)} g^{(l)} modulo L_{pqr}}(b).
				Hence
					\begin{center}
						$Z_m^0 = \mathbf{V}(L_{322} + \langle f^{(0)},...,f^{(m)} \rangle)$.
					\end{center}
		\end{itemize}
	\end{Rem}
Here,
we note that $Z_m^1$,
$Z_m^2$ and $Z_m^3$ have the following symmetries.
	\begin{Lemma}\label{symmetries of $S$}
	Assume $m \geq 3$.
	The closed subsets $Z_m^0$,
	$Z_m^1$,
	$Z_m^2$ and $Z_m^3$ are permutated by $\psi_1$ and $\psi_2$ (see Notation \ref{symmetric morphisms in D_4^0}) as follows.
		\begin{itemize}
		\item[(a)]
			$\psi_1(Z_m^0) = Z_m^0, \psi_1(Z_m^1) = Z_m^2, \psi_1(Z_m^2) = Z_m^1$ and $\psi_1(Z_m^3) = Z_m^3$.
		\item[(b)]
			$\psi_2(Z_m^0) = Z_m^0, \psi_2(Z_m^1) = Z_m^1, \psi_2(Z_m^2) = Z_m^3$ and $\psi_2(Z_m^3) = Z_m^2$.
		\end{itemize}
	\end{Lemma}
	\begin{proof}
	(a) We may think of $\varphi_1$ as an automorphism of the quotient field of $R_m$ which preserves $R_m$ and maps $(R_m)_{z_1}$ to $(R_m)_{y_1}$ and $(R_m)_{y_1}$ to $(R_m)_{z_1}$.
	Arguing as in Lemma \ref{symmetric morphisms}(b) and (c),
	we have
		\begin{alignat*}{7}
		\varphi_1(I_m^0) &= I_m^0,\\
		\varphi_1(I_m^1) &= \varphi_1(J_m^1\cdot (R_m)_{z_1} \cap R_m) &&= \varphi_1(J_m^1\cdot (R_m)_{z_1}) \cap R_m &&= J_m^2\cdot (R_m)_{y_1} \cap R_m &&= I_m^2,\\
		\varphi_1(I_m^2) &= \varphi_1(J_m^2\cdot (R_m)_{y_1} \cap R_m) &&= \varphi_1(J_m^2\cdot (R_m)_{y_1}) \cap R_m &&= J_m^1\cdot (R_m)_{z_1} \cap R_m &&= I_m^1,\\
		\varphi_1(I_m^3) &= \varphi_1(J_m^3\cdot (R_m)_{z_1} \cap R_m) &&= \varphi_1(J_m^3\cdot (R_m)_{z_1}) \cap R_m &&= J_m^3\cdot (R_m)_{y_1} \cap R_m &&= I_m^3.
		\end{alignat*}
	(see Remark \ref{the codimension of the irreducible components}(b)).
	Hence we have
		\begin{center}
		$\psi_1(Z_m^0) = Z_m^0, \psi_1(Z_m^1) = Z_m^2, \psi_1(Z_m^2) = Z_m^1$ and $\psi_1(Z_m^3) = Z_m^3$.
		\end{center}
	We can prove (b) in the same way as (a).
	\end{proof}
In the following,
we give the irreducible decomposition of the singular fiber $S_m^0$.
First of all,
we give the decomposition of $S_m^0$.
	\begin{Prop}\label{rough decomposition of D_4^0}
		For $m \geq 3$,
		we have
			\begin{align*}
				S_m^0	&= Z_m^0 \cup Z_m^1 \cup Z_m^2 \cup Z_m^3.
			\end{align*}
		Moreover,
		$Z_m^1$, $Z_m^2$ and $Z_m^3$ are pairwise distinct.
	\end{Prop}
	\begin{proof}
		By Lemma \ref{reduction of f^{(l)} g^{(l)} modulo L_{pqr}}(a) and (b),
		we have
			\begin{align*}
				f_{111}^{(0)}	&= f_{111}^{(1)} = 0,\\
				f^{(2)}_{111}	&= x_1^2.
			\end{align*}
		Hence the defining ideal of $S_m^0$ is
			\begin{center}
				$\sqrt{\langle x_0,y_0,z_0,x_1^2,f^{(3)},...,f^{(m)} \rangle} = \sqrt{\langle x_0,y_0,z_0,x_1,f^{(3)}_{211},...,f^{(m)}_{211} \rangle}$.
			\end{center}
		Using Lemma \ref{reduction of f^{(l)} g^{(l)} modulo L_{pqr}}(d),
		we have
			\begin{center}
				$f_{211}^{(3)} = y_1z_1(y_1+z_1)$.
			\end{center}
		Thus we have
			\begin{align*}
				S_m^0	=& \mathbf{V}(\langle x_0,x_1,y_0,y_1,z_0,f^{(4)},...,f^{(m)} \rangle) \cup \mathbf{V}(\langle x_0,x_1,y_0,z_0,z_1,f^{(4)},...,f^{(m)} \rangle)\\
						 & \cup \mathbf{V}(\langle x_0,x_1,y_0,z_0,y_1+z_1,f^{(4)},...,f^{(m)} \rangle)\\
						=& \mathbf{V}(J_m^1) \cup \mathbf{V}(J_m^2) \cup \mathbf{V}(J_m^3).
			\end{align*}
		Since $(\mathbb{A}^3)_m = \mathbf{D}(y_1) \cup \mathbf{D}(z_1) \cup \mathbf{V}(y_1,z_1)$ and $S_m^0$ is closed,
		we have
			\begin{center}
				$S_m^0 = S_m^0 \cap (\mathbb{A}^3)_m = \overline{S_m^0 \cap \mathbf{D}(y_1)} \cup \overline{S_m^0 \cap \mathbf{D}(z_1)} \cup (S_m^0 \cap \mathbf{V}(y_1,z_1))$.
			\end{center}
		Note that $\mathbf{V}(J_m^1) \cap \mathbf{D}(y_1) = \emptyset$ since $y_1 \in J_m^1$.
		Thus
			\begin{center}
				$S_m^0 \cap \mathbf{D}(y_1) = (\mathbf{V}(J_m^2) \cap \mathbf{D}(y_1)) \cup (\mathbf{V}(J_m^3) \cap \mathbf{D}(y_1))$.
			\end{center}
		Similarly,
			\begin{center}
				$S_m^0 \cap \mathbf{D}(z_1) = (\mathbf{V}(J_m^1) \cap \mathbf{D}(z_1)) \cup (\mathbf{V}(J_m^3) \cap \mathbf{D}(z_1))$.
			\end{center}
		By Remark \ref{the codimension of the irreducible components},
		we have
			\begin{center}
				$\overline{S_m^0 \cap \mathbf{D}(y_1)} = Z_m^2 \cup Z_m^3$
			\end{center}
		and
			\begin{center}
				$\overline{S_m^0 \cap \mathbf{D}(z_1)} = Z_m^1 \cup Z_m^3$.
			\end{center}
		Clearly,
		we have $S_m^0 \cap \mathbf{V}(y_1,z_1) = \mathbf{V}(I_m^0) = Z_m^0$.
		Therefore,
			\begin{align*}
				S_m^0	&=	\overline{\mathbf{V}(J_m^1) \cap \mathbf{D}(z_1)} \cup \overline{\mathbf{V}(J_m^2) \cap \mathbf{D}(y_1)} \cup \overline{\mathbf{V}(J_m^3) \cap \mathbf{D}(y_1)} \cup (S_m^0 \cap \mathbf{V}(y_1,z_1))\\
						&=	Z_m^0 \cup Z_m^1 \cup Z_m^2 \cup Z_m^3.
			\end{align*}
		Finally,
		we check that $Z_m^1$, $Z_m^2$ and $Z_m^3$ are pairwise distinct.
		By Lemma \ref{symmetries of $S$},
		it suffices to show that $Z_m^3 \not\subseteq Z_m^1$.
		The jet $P=(0,t,t)$ corresponds to the point where $y_1 = z_1 = 1$ and all of the other coordinates are $0$,
		and we see $P \in Z_m^3$ and $P \not\in \mathbf{V}(J_m^1) \supseteq Z_m^1$.
		Hence $Z_m^3 \not\subseteq Z_m^1$.
	\end{proof}
Next,
we give the irreducible decomposition of the singular fiber for small $m$.
	\begin{Prop}\label{小さな次数でのD_4^0の既約分解}
		For $0 \leq m \leq 4$,
		the irreducible decomposition of the singular fiber $S_m^0$ is as follows.
			\begin{enumerate}
				\item[(a)]
					$S_0^0 = \mathbf{V}(L_{111})$,
				\item[(b)]
					$S_1^0 = \mathbf{V}(L_{111})$,
				\item[(c)]
					$S_2^0 = \mathbf{V}(L_{211})$,
				\item[(d)]
					$S_3^0 = Z_3^1 \cup Z_3^2 \cup Z_3^3$,
				\item[(e)]
					$S_4^0 = Z_4^1 \cup Z_4^2 \cup Z_4^3$.
			\end{enumerate}
		Moreover,
		for $1 \leq m \leq 4$,
		the codimension of any irreducible component $Z \subseteq S_m^0$ is
			\begin{center}
				$\mathrm{codim}_{(\mathbb{A}^3)_m}\ Z = m+2$.
			\end{center}
	\end{Prop}
\begin{proof}
	(a)
	By Lemma \ref{reduction of f^{(l)} g^{(l)} modulo L_{pqr}}(a),
	$f^{(0)}_{111} = 0$,
	so $S_0^0 = \mathbf{V}(L_{111})$.
	\medskip\\
	(b)
	By Lemma \ref{reduction of f^{(l)} g^{(l)} modulo L_{pqr}}(a),
	$f^{(1)}_{111} = 0$,
	so $S_1^0 = \mathbf{V}(L_{111})$.
	\medskip\\
	(c)
	By Lemma \ref{reduction of f^{(l)} g^{(l)} modulo L_{pqr}}(b),
	$f^{(2)}_{111} = x_1^2$,
	so $S_2^0 = \mathbf{V}(L_{211})$.
	\medskip\\
	(d)
	By Proposition \ref{rough decomposition of D_4^0} and Remark \ref{the codimension of the irreducible components}(a),
	$S_3^0 = Z_3^0 \cup Z_3^1 \cup Z_3^2 \cup Z_3^3$ and $Z_3^1$, $Z_3^2$ and $Z_3^3$ are irreducible and pariwise distinct.
	Note that $J_3^1 = L_{221}$ is prime,
	so $I_3^1 = J_3^1 \subsetneq L_{222} = I_3^0$.
	Hence $Z_3^1 \supsetneq Z_3^0$.
	Thus the irreducible decomposition of $S_3^0 = Z_3^1 \cup Z_3^2 \cup Z_3^3$.
	\medskip\\
	(e)
	By Proposition \ref{rough decomposition of D_4^0},
		\begin{alignat*}{5}
			S_4^0	= Z_4^0 \cup Z_4^1 \cup Z_4^2 \cup Z_4^3.
		\end{alignat*}
	Here,
	$Z_4^0 = S_4^0 \cap \mathbf{V}(y_1,z_1)$ is not an irreducible component.
	Indeed,
	we have
		\begin{center}
			$Z_4^0 = \mathbf{V}(L_{222} + \langle f^{(4)}_{222} \rangle) = \mathbf{V}(L_{322})$
		\end{center}
	by Remark \ref{the codimension of the irreducible components}(c) and Lemma \ref{reduction of f^{(l)} g^{(l)} modulo L_{pqr}}(b),
	and hence
		\begin{center}
			$\mathrm{codim}_{(\mathbf{A}^3)_4}\ \mathbf{V}(L_{322}) = 3+2+2 = 7$,
		\end{center}
	while by Lemma \ref{codimension conditions}(b),
	the codimension of the irreducible component of $S_4^0$ is at most $4+2 = 6$.
	Thus,
	$S_4^0 \cap \mathbf{V}(y_1,z_1)$ is not an irreducible component of $S_4^0$ and,
	by Remark \ref{the codimension of the irreducible components}(a) and Proposition \ref{rough decomposition of D_4^0},
	the irreducible decomposition of $S_4^0$ is given by
		\begin{center}
			$Z_4^1 \cup Z_4^2 \cup Z_4^3$.
		\end{center}
\end{proof}
Next lemma is one key point to prove that $Z_m^0$ is irreducible for $m \geq 5$.
	\begin{Lemma} \label{D_4^0におけるZ_m^0の形}
		\begin{itemize}
			\item[(a)]
				For $m = 5$,
				$Z_5^0 = \mathbf{V}(L_{322})$.
			\item[(b)]
				For $m \geq 6$,
					\begin{center}
						$Z_m^0 \cong \mathbb{A}^{11} \times S_{m-6}$.
					\end{center}
		\end{itemize}
	\end{Lemma}
\begin{proof}
First note that,
by Definition \ref{既約成分の定義イデアルD_4^0},
Remark \ref{座標関数を含むイデアルの簡略化} and Remark \ref{the codimension of the irreducible components}(c),
$Z_m^0$ is defined by
	\begin{center}
		$I_m^0 = L_{322} + \langle f^{(0)},...,f^{(m)} \rangle = L_{322} + \langle f_{322}^{(0)},...,f_{322}^{(m)} \rangle$.
	\end{center}
Moreover,
by Lemma \ref{reduction of f^{(l)} g^{(l)} modulo L_{pqr}}(a) and (b),
we have
	\begin{align*}
		f^{(0)}_{322}	&= f^{(1)}_{322} = \cdots = f^{(5)}_{322} = 0.
	\end{align*}
(a)
Clearly,
we have
$I_5^0 = L_{322}+\langle f_{322}^{(0)},...,f^{(5)}_{322} \rangle = L_{322}$.
Hence $Z_5^0 = \mathbf{V}(L_{322})$.
\medskip\\
(b)
We have
	\begin{center}
		$I_m^0 = L_{322} + \langle f_{322}^{(6)},...,f_{322}^{(m)} \rangle$
	\end{center}
for $m \geq 6$.
As in Notation \ref{modulo polynomials},
we write $\mathbf{x}_3 = x_3t^3 + x_4t^4 + \cdots + x_mt^m$,
$\mathbf{y}_2 = y_2t^2 + y_3t^3 + \cdots + y_mt^m$ and $\mathbf{z}_2 = z_2t^2 + z_3t^3 + \cdots + z_mt^m$ and calculate $f(\mathbf{x}_3,\mathbf{y}_2,\mathbf{z}_2)$;
	\begin{alignat*}{5}
		f\left(\sum_{i=3}^mx_it^i,\sum_{i=2}^my_it^i,\sum_{i=2}^mz_it^i\right)
																		&= f\left(t^3\sum_{i=3}^mx_it^{i-3},t^2\sum_{i=2}^my_it^{i-2},
																		t^2\sum_{i=2}^mz_it^{i-2}\right)\\
																		&= t^6f\left(\sum_{i=3}^mx_it^{i-3},\sum_{i=2}^my_it^{i-2},\sum_{i=2}^mz_it^{i-2}\right)\\
																		&= t^6\big(f^{(0)}(x_3,y_2,z_2)+f^{(1)}(x_3,x_4,y_2,y_3,z_2,z_3)t+ \cdots\\
																		&\ \ \ +f^{(m-6)}(x_3,...,x_{m-3},y_2,...,y_{m-4},z_2,...,z_{m-4})t^{m-6} + \cdots \big).
		\end{alignat*}
(Note that the second equality holds because $f$ is weighted homogenous.)
We set $f^{(l)}_6 := f^{(l-6)}(x_3,...,x_{l-3}$ $,y_2,...,y_{l-4},z_2,...,z_{l-4})$ for $6 \leq l \leq m$,
then $f^{(l)}_{322} = f^{(l)}_6$.
Hence
		\begin{alignat}{7}
		Z_m^0 &= \mathbf{V}(L_{322} + \langle f_{322}^{(6)},...,f_{322}^{(m)} \rangle)\\
									   &= \mathbf{V}(L_{322} + \langle f^{(6)}_6,...,f^{(m)}_6 \rangle)
									   &\ \cong \mathbb{A}^{11}\times S_{m-6}.\label{Z_m^0の斉次性を使う表記}
		\end{alignat}
\end{proof}
Now,
we prove that $Z_m^0$ is irreducible of dimension $2m+1$ for $m \geq 5$.
To prove this claim,
it suffices to show that $S_m$ is irreducible of dimension $2(m+1)$ for $m \geq 0$ by the previous lemma.
	\begin{Rem}
		In \cite[Corollary 10.2.9]{Is},
		the following statement is proven:
		For a locally complete intersection variety $X$ and every positive integer $m$,
		the $m$-th jet scheme $X_m$ of $X$ is irreducible if and only if $X$ has Mather-Jacobian canonical singularities.
		
		We note that if $X$ is a normal locally complete intersection variety,
		then the notion of Mather-Jacobian canonical singularities coincides with the notion of canonical singularities by Proposition 10.1.10 in \cite{Is}.
		Hence a singular surface of type $D_4^0$ is Mather-Jacobian canonical,
		and the above result can be applied.
		
		For the reader's convenience,
		we prove the irreducibility of $S_m$ by a direct calculation.
	\end{Rem}
	\begin{Prop}\label{irreducibility of Z_m^0 and S_m of D_4^0}
		\begin{itemize}
			\item[(a)]
				For $m \geq 0$,
				$S_m$ is irreducible of dimension $2(m+1)$ (or equivalently of codimension $m+1$ in $(\mathbb{A}^3)_m$).
			\item[(b)]
				For $m \geq 5$,
				$Z_m^0$ is irreducible of dimension $2m+1$ (or equivalently of codimension $m+2$ in $(\mathbb{A}^3)_m$).
		\end{itemize}
	\end{Prop}
\begin{proof}
	By Remark \ref{非特異部分の逆像},
		\begin{center}
			$S_m = \overline{(\pi_m^{S})^{-1}(S_{\mathrm{sm}})} \cup S_m^0$
		\end{center}
	and $\overline{(\pi_m^{S})^{-1}(S_{\mathrm{sm}})}$ is irreducible of dimension $2(m+1)$.
	By Lemma \ref{codimension conditions}(a), 
	in order to show the assertion (a) for $S_m$,
	it suffices to show that $\mathrm{dim}\ S_m^0 \leq 2m+1$.\medskip\\
	For $0 \leq m \leq 4$,
	this holds by Proposition \ref{小さな次数でのD_4^0の既約分解}.
	Thus (a) holds for $0 \leq m \leq 4$.
	
	Before proving the statements for $m \geq 5$,
	we note that,
	by Proposition \ref{rough decomposition of D_4^0},
	we have
		\begin{alignat}{5}\label{decomp of S_m^0 D_4^0}
			S_m^0	&= Z_m^0 \cup Z_m^1 \cup Z_m^2 \cup Z_m^3,
		\end{alignat}
	and,
	by Remark \ref{the codimension of the irreducible components},
	$Z_m^1$,
	$Z_m^2$ and $Z_m^3$ are irreducible of dimension $2m+1$.
	Thus it suffices to show that the remaining part of $S_m^0$,
	i.e. $Z_m^0$,
	is irreducible of dimension $2m+1$ for $m \geq 5$,
	i.e. the statement (b).
	
	For $m = 5$,
	we have $Z_5^0 = \mathbf{V}(L_{322})$ and $Z_5^0$ is irreducible of dimension $3(5+1)-7 = 11 (= 2\times5+1)$ by Lemma \ref{D_4^0におけるZ_m^0の形}(a).
	Thus $S_5$ is irreducible of dimension $12$.
	
	For $m \geq 6$,
	we assume that (a) holds for $S_{m'}$ with $0 \leq m' < m$ and show the assertion for $S_m$ and $Z_m^0$.
	By Lemma \ref{D_4^0におけるZ_m^0の形}(b),
	we have $Z_m^0 \cong \mathbb{A}^{11} \times S_{m-6}$.
	By the inductive hypothesis,
	$S_{m-6}$ is irreducible of dimension $2(m-6+1) = 2m-10$.
	Hence $Z_m^0$ is irreducible of dimension
		\begin{center}
			$\mathrm{dim}\ Z_m^0 = \mathrm{dim}\ \mathbb{A}^{11} \times S_{m-6} = 11+ (2m-10) = 2m+1$.
		\end{center}
	Thus $S_m$ is irreducible of dimension $2(m+1)$.
\end{proof}

Now,
we give the irreducible decomposition of $S_m^0$ for $m \geq 5$.
In the following theorem,
note that the number of irreducible components of the singular fiber is constant.
	\begin{Thm}\label{irreducible decomposition D_4^0}
	For $m \geq 5$,
	the irreducible decomposition of the singular fiber $S_m^0$ is given by
		\begin{center}
		$S_m^0 = Z_m^0 \cup Z_m^1 \cup Z_m^2 \cup Z_m^3$.
		\end{center}
	\end{Thm}
	\begin{proof}
	First of all,
	we note that,
	for $0 \leq i \leq 3$,
	the closed subsets $Z_m^i$ are irreducible of dimension $2m+1$ by Remark \ref{the codimension of the irreducible components} and Proposition \ref{irreducibility of Z_m^0 and S_m of D_4^0}(b).
	Moreover,
	by Proposition \ref{rough decomposition of D_4^0},
	we have
		\begin{alignat*}{7}
			S_m^0	&= Z_m^0 \cup Z_m^1 \cup Z_m^2 \cup Z_m^3
		\end{alignat*}
	for $m \geq 5$.
	Hence all that remains is to check that $Z_m^i$ are pairwise distinct for $0 \leq i \leq 3$.
	For $m \geq 5$,
	we have $Z_m^0 \cap (\mathbf{D}(y_1) \cup \mathbf{D}(z_1)) = \emptyset$ while $Z_m^i \cap (\mathbf{D}(y_1) \cup \mathbf{D}(z_1)) \neq \emptyset$ for $1 \leq  i \leq 3$,
	so $Z_m^0$ is different from $Z_m^1$,
	$Z_m^2$ and $Z_m^3$.
	Moreover,
	by Proposition \ref{rough decomposition of D_4^0},
	$Z_m^1$,
	$Z_m^2$ and $Z_m^3$ are pairwise distinct.
	This completes the proof.
	\end{proof}

In the following,
we determine the inclusion relation between the intersections of two irreducible components of the singular fiber of a singular surface of type $D_4^0$.
	\begin{Thm}\label{main theorem of D_4^0}
	Let $k$ be an algebraically closed field of characteristic $2$,
	$S \subset \mathbb{A}^3$ the surface defined by $f = x^2+y^2z+yz^2$ in the affine space over $k$,
	$S_m^0$ the singular fiber of the $m$-th jet scheme $S_m$ with $m \geq 5$ and $Z_m^0,...,Z_m^3$ its irreducible components as in Definition \ref{既約成分の定義イデアルD_4^0}.
		\begin{itemize}
			\item[(a)]
				For $0 \leq i < j \leq 3$,
				$Z_m^i \cap Z_m^j \subsetneq Z_m^0$.
			\item[(b)]
				For $1 \leq i, j \leq 3$ with $i \neq j$,
				$Z_m^0 \cap Z_m^i \not\subseteq Z_m^0 \cap Z_m^j$.
			\item[(c)]
				For $1 \leq i, j \leq 3$ with $i \neq j$,
				$Z_m^i \cap Z_m^j \subsetneq Z_m^0 \cap Z_m^i$.
			\item[(d)]
				For $1 \leq i < j \leq 3$ and $1 \leq l \leq 3$,
				$Z_m^0 \cap Z_m^l \not\subseteq Z_m^i \cap Z_m^j$.
		\end{itemize}
	In particular,
	for $0 \leq i<j \leq 3$,
	$Z_m^i \cap Z_m^j$ is maximal in  $\{Z_m^i \cap Z_m^j \mid i, j \in \{0,1,2,3\}, i \neq j\}$ with respect to the inclusion relation if and only if $(i,j) = (0,1), (0,2), (0,3)$.
	\end{Thm}
	\begin{Rem}
	Over algebraically closed fields of characteristic $0$,
	the same statment was proved in \cite[Theorem 3.17]{Ky}.
	\end{Rem}
	\begin{proof}
	(a)
	By looking at the dimensions,
	we have $Z_m^0 \cap Z_m^j \subsetneq Z_m^0$ for $j = 1,2,3$.
	Hence the assertion is holds if $i = 0$,
	so we may assume that $(i,j) = (1,2)$ by Lemma \ref{symmetries of $S$}.
	From the definitions of $Z_m^i$ (Definition \ref{既約成分の定義イデアルD_4^0}),
	it suffices to check that $\sqrt{I_m^1 + I_m^2} \supseteq I_m^0$.
	By the definitions of the ideals $J_m^1$ and $J_m^2$,
	we have $y_1 \in J_m^1$ and $z_1 \in J_m^2$,
	hence $\sqrt{J_m^1 + J_m^2} \supseteq L_{222}$.
	Therefore $\sqrt{I_m^1 + I_m^2} \supseteq \sqrt{J_m^1 + J_m^2} \supseteq I_m^0$.
	By looking at the dimensions,
	we have $Z_m^1 \cap Z_m^2 \subsetneq Z_m^0$.
	\medskip\\
	(b)
	By Lemma \ref{symmetries of $S$},
	we only have to show that $Z_m^0 \cap Z_m^1 \not\subseteq Z_m^0 \cap Z_m^2$.
	Let us consider the jet $\gamma = (0, 0, t^2) \in S_m$.
	We prove the following two claims:
		\begin{itemize}
		\item[$\rm(\hspace{.18em}i\hspace{.18em})$]
			$\gamma \in Z_m^0 \cap Z_m^1$,
		\item[$\rm(\hspace{.08em}ii\hspace{.08em})$]
			$\gamma \not\in Z_m^0 \cap Z_m^2$.
		\end{itemize}
	$\rm(\hspace{.18em}i\hspace{.18em})$
	We can easily check that $\gamma \in Z_m^0$ by Definition \ref{既約成分の定義イデアルD_4^0}.
	Let us consider the family
		\begin{center}
		$\gamma_s := (0,0,st+t^2)$ for $s \neq 0$.
		\end{center}
	We have $\gamma_s \in \mathbf{V}(J_m^1)$ and $\gamma_s \in \mathbf{D}(z_1)$,
	so $\gamma_s \in \overline{\mathbf{V}(J_m^1) \cap \mathbf{D}(z_1)} = Z_m^1$.
	When $s$ goes to $0$,
	$\gamma_s$ approaches $\gamma$ and this jet belongs to $Z_m^1$.
	Hence $\gamma \in Z_m^0 \cap Z_m^1$.
	\medskip\\
	$\rm(\hspace{.08em}ii\hspace{.08em})$
	We show that $z_2 \in \sqrt{I_m^0 + I_m^2}$,
	and then $\gamma \not\in Z_m^0 \cap Z_m^2$.
	By Lemma \ref{reduction of f^{(l)} g^{(l)} modulo L_{pqr}},
		\begin{center}
		$f^{(5)}_{212} = y_1^2z_3 + y_1z_2^2 = y_1(y_1z_3 + z_2^2)$.
		\end{center}
	Then we have $y_1z_3 + z_2^2 \in J_m^2\cdot (R_m)_{y_1} \cap R_m = I_m^2$,
	so
		\begin{center}
		$z_2^2 = -y_1z_3 + (y_1z_3 + z_2^2) \in I_m^0 + I_m^2$.
		\end{center}
	Hence we have $z_2 \in \sqrt{I_m^0 + I_m^2}$ and $\gamma \not\in Z_m^0 \cap Z_m^2$.
	Therefore,
	we have $Z_m^0 \cap Z_m^i \not\subseteq Z_m^0 \cap Z_m^j$,
	or equivalently $Z_m^0 \cap Z_m^i \not\subseteq Z_m^j$,
	for $i,j \in \{1,2,3\}$ with $i \neq j$.
	\medskip\\
	(c)
	By the assertion (a),
	we have $Z_m^i \cap Z_m^j \subseteq Z_m^0 \cap Z_m^i$ for $i,j \in \{ 1,2,3 \}$ with $i \neq j$.
	If $Z_m^i \cap Z_m^j = Z_m^0 \cap Z_m^i$,
	then
		\begin{center}
			$Z_m^0 \cap Z_m^i = Z_m^0 \cap Z_m^i \cap Z_m^j \subseteq Z_m^0 \cap Z_m^j$,
		\end{center}
	a contradiction to (b).
	Thus,
	$Z_m^i \cap Z_m^j \subsetneq Z_m^0 \cap Z_m^i$
	\medskip\\
	(d)
	Take any $i,j,l \in \{1,2,3\}$ with $i \neq j$ and $j \neq l$ (not necessarily $i \neq l$).
	If $Z_m^0 \cap Z_m^l \subseteq Z_m^i \cap Z_m^j$,
	then 
		\begin{center}
			$Z_m^0 \cap Z_m^l \subseteq Z_m^0 \cap Z_m^i \cap Z_m^j \subseteq Z_m^0 \cap Z_m^j$,
		\end{center}
	a contradiction to (b).
	Hence $Z_m^0 \cap Z_m^l \not\subseteq Z_m^i \cap Z_m^j$.
	\end{proof}
	
Now we define a graph $\Gamma$ from the information of $S_m^0$ as follows.
	\begin{Const}[{\cite[Construction 2.15]{Ky}}]\label{graph constructed by the singular fiber}
	The graph $\Gamma(S_m^0) = (V,E)$ is constructed as follows.
		\begin{itemize}
		\item $V$ is the set of irreducible components of $S_m^0$.
		\item $E$ is the set of all maximal elements of $\{Z_m^i \cap Z_m^j \mid i, j \in \{0,1,2,3\}, i \neq j\}$,
				and $Z_m^i \cap Z_m^j \in E$ connects $Z_m^i$ and $Z_m^j$.
		\end{itemize}
	In other words,
	there is given an edge between $Z_m^i$ and $Z_m^j$ if and only if $Z_m^i \cap Z_m^j$ is maximal among the intersections of two distinct irreducible components.
	\end{Const}
	\begin{Coro}\label{graph of D_4^0}
	For a singular surface $S$ of type $D_4^0$ and $m \geq 5$,
	the graph $\Gamma(S_m^0)$ obtained by Construction \ref{graph constructed by the singular fiber} is isomorphic to the resolution graph of $S$.
	\end{Coro}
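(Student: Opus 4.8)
The plan is to deduce the corollary directly from the two main structural results already proved, Theorem~\ref{irreducible decomposition D_4^0} and Theorem~\ref{main theorem of D_4^0}, and then to match the resulting graph with the Dynkin diagram of type $D_4$.

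First I would use Theorem~\ref{irreducible decomposition D_4^0}: for $m \geq 5$ the singular fiber $S_m^0$ has exactly four irreducible components $Z_m^0, Z_m^1, Z_m^2, Z_m^3$, and they are pairwise distinct. By Construction~\ref{graph constructed by the singular fiber}, the vertex set $V$ of $\Gamma(S_m^0)$ is therefore a set of four elements. Next I would apply Theorem~\ref{main theorem of D_4^0}: the maximal elements of $\{Z_m^i \cap Z_m^j \mid i,j \in \{0,1,2,3\},\ i \neq j\}$ with respect to inclusion are exactly $Z_m^0 \cap Z_m^1$, $Z_m^0 \cap Z_m^2$ and $Z_m^0 \cap Z_m^3$, and these three subsets are pairwise distinct. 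By the definition of $E$ in Construction~\ref{graph constructed by the singular fiber}, this produces exactly three edges, the edge $Z_m^0 \cap Z_m^i$ joining $Z_m^0$ and $Z_m^i$ for $i = 1,2,3$, and there is no edge among $Z_m^1, Z_m^2, Z_m^3$. Hence $\Gamma(S_m^0)$ is the graph on four vertices in which $Z_m^0$ is joined to each of the three remaining vertices and there are no further edges.

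The last step is to observe that this graph is precisely the Dynkin diagram of type $D_4$. Since, by the discussion in Section~2, a singularity of type $D_4^0$ is a rational double point whose resolution graph is the Dynkin diagram of type $D_4$, the graph $\Gamma(S_m^0)$ is isomorphic to the resolution graph of $S$, which is the desired assertion.

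There is essentially no obstacle remaining: the two cited theorems carry all the geometric content, and the only point requiring attention is that the three maximal intersections are genuinely distinct, so that they contribute three distinct edges rather than being identified; this is exactly the ``pairwise distinct'' clause of Theorem~\ref{main theorem of D_4^0}, without which the construction of $E$ would be ambiguous.
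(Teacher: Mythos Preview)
Your proof is correct and follows essentially the same approach as the paper: both deduce the vertex set from Theorem~\ref{irreducible decomposition D_4^0}, the edge set from Theorem~\ref{main theorem of D_4^0}, and then identify the resulting star on four vertices with the $D_4$ Dynkin diagram. Your additional remark on the necessity of the ``pairwise distinct'' clause is a nice clarification that the paper leaves implicit.
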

	\begin{proof}
	By Theorem \ref{irreducible decomposition D_4^0} and Theorem \ref{main theorem of D_4^0},
	$\Gamma(S_m^0)$ is the pair of $V = \{ Z_m^0, Z_m^1, Z_m^2, Z_m^3 \}$ and $E = \{ Z_m^0 \cap Z_m^1, Z_m^0 \cap Z_m^2, Z_m^0 \cap Z_m^3 \}$.
	Hence $\Gamma(S_m^0)$ can be described as
		\[
			\xymatrix{
				 &  & Z_m^2\\
				Z_m^1 \ar@{-}[r] & Z_m^0 \ar@{-}[ru] \ar@{-}[rd] & \\
				 & & Z_m^3,
			}
		\]
	which is a Dynkin diagram of type $D_4$.
	\end{proof}
\section{Jet schemes of a singular surface of type $D_4^1$}
In this section,
we prove the statements of Theorem \ref{main theorem1} and Theorem \ref{main theorem2} on the singular surface of type $D_4^1$.
The proof goes mostly in the same way as in \cite{M2} and Theorem \ref{irreducible decomposition D_4^0},
but we need more elaborate analysis to show that $Z_m^0$ is irreducible.

Let $g = x^2+y^2z + yz^2 + xyz$ and $S = \mathbf{V}(g) \subseteq \mathbb{A}^3$,
which has a singularity of type $D_4^1$ at $0$ (\cite[Section 3]{Ar}).
Furthermore,
let $R_m = k[x_0,...,x_m,y_0,...,y_m,z_0,...,z_m]$,
$S_m$ the $m$-th jet scheme of $S$ and $S_m^0$ the singular fiber of $S_m$.
Here,
we note that the equation $g = x^2 + y^2z + yz^2 + xyz$ is not weighted homogenous,
and hence a certain part of the arguments of Mourtada \cite{M2} does not work in this case.
More specifically,
we used the fact that the equation is weighted homogenous to prove Lemma \ref{D_4^0におけるZ_m^0の形}(b),
but in this case we cannot use this idea.
In this section,
we focus on certain subsets of $Z_m^0$ and their dimensions (or equivalently codimensions) to prove the irreducibility of $Z_m^0$.
\medskip

Let the ideal $L_{pqr} := \langle x_0,...,x_{p-1},y_0,...,y_{q-1},z_0,...,z_{r-1} \rangle \subseteq R_m = k[x_0,...,x_m,y_0,...,y_m,z_0,...,z_m]$ for $0 < p,q,r \leq m+1$ be as in the previous section.
	\begin{Def}
		For $m \geq 1$,
		we consider the following ideals in $R_m$:
			\begin{alignat*}{11}
				J_m^1 &= L_{211} &&+ \langle y_1 \rangle &&+ \langle g^{(0)},...,g^{(m)} \rangle &&= L_{221} + \langle g^{(0)},...,g^{(m)} \rangle,\\
				J_m^2 &= L_{211} &&+ \langle z_1 \rangle &&+ \langle g^{(0)},...,g^{(m)} \rangle &&= L_{212} + \langle g^{(0)},...,g^{(m)} \rangle,\\
				J_m^3 &= L_{211} &&+ \langle y_1+z_1 \rangle &&+ \langle g^{(0)},...,g^{(m)} \rangle.
			\end{alignat*}
	\end{Def}
By Remark \ref{relations between Gamma(X_m) and affine coordinate} and $J_m^i \supset L_{111}$ for $i = 1,2,3$,
these ideals include the defining ideal of the singular fiber,
and hence correspond to closed subsets in the singular fiber $S_m^0$.

We have the following symmetries:
	\begin{Nota}\label{def of symmetric morphism of D_4^1}
		Let $\varphi_1$ and $\varphi_2$ be the automorphisms of $R_m$ defined by
		\[
			\varphi_1 :
				\begin{cases}
					x_i \mapsto x_i, \\
					y_i \mapsto z_i, \\
					z_i \mapsto y_i,
				\end{cases}
			\varphi_2 :
				\begin{cases}
					x_i \mapsto x_i, \\
					y_i \mapsto y_i, \\
					z_i \mapsto x_i + y_i + z_i.
				\end{cases}
		\]
		These automorphisms $\varphi_1$ and $\varphi_2$ induce ring isomorphisms $(\varphi_1)_{z_1} : (R_m)_{z_1} \rightarrow (R_m)_{y_1}$ and $(\varphi_2)_{y_1} : (R_m)_{y_1} \rightarrow (R_m)_{y_1}$.
		We write the isomorphisms corresponding to $\varphi_1$, $\varphi_2$, $(\varphi_1)_{z_1}$ and $(\varphi_2)_{y_1}$ as $\psi_1$, $\psi_2$, $(\psi_1)_{z_1}$ and $(\psi_2)_{y_1}$,
		respectively.
		For simplicity,
		we write $(\varphi_1)_{z_1}$, $(\varphi_2)_{y_1}$, $(\psi_1)_{z_1}$ and $(\psi_2)_{y_1}$ as $\varphi_1$, $\varphi_2$, $\psi_1$ and $\psi_2$.
	\end{Nota}
	\begin{Lemma}\label{symmetric morphisms of D_4^1}
		\begin{enumerate}
		\item[(a)]
			For $m \geq 1$, $i \in \{ 0,...,m \}$ and $k = 1,2$,
			$\varphi_k$ preserve $g^{(i)}$ i.e.,
				\begin{center}
					$\varphi_k(g^{(i)}) = g^{(i)}$
				\end{center}
			in $R_m$.
			In particular,
			the morphisms $\psi_k$ preserve $S_m$.
		\item[(b)]
			For $m \geq 1$,
				\begin{center}
					$\varphi_1(J_m^1\cdot (R_m)_{z_1}) = J_m^2\cdot (R_m)_{y_1}$
				\end{center}
			and
				\begin{center}
					$\varphi_2(J_m^2\cdot (R_m)_{y_1}) = J_m^3\cdot (R_m)_{y_1}$.
				\end{center}
		\item[(c)]
			The morphisms $\varphi_1$, $\varphi_2$, $\psi_1$ and $\psi_2$ preserve the union,
			the intersection and the inclusion relations of sets.
		\end{enumerate}
	\end{Lemma}	
	\begin{proof}
		The proofs of the assertions (b) and (c) are the same as those of Lemma \ref{symmetric morphisms}(b) and (c),
		so we only check the assertion (a).

		Note that the automorphisms $\varphi_1$ and $\varphi_2$ are induced by the automorphisms of $k[x,y,z]$ defined by
			\[
				\overline{\varphi_1} =
				\begin{cases}
				x \mapsto x, \\
				y \mapsto z, \\
				z \mapsto y,
				\end{cases}
				\overline{\varphi_2} =
				\begin{cases}
				x \mapsto x, \\
				y \mapsto y, \\
				z \mapsto x + y + z,
				\end{cases}
			\]
		and $\overline{\varphi_1}(g) = x^2 + z^2y + zy^2 + xzy = g$
		and
			\begin{alignat*}{7}
				\overline{\varphi_2}(g)	&= x^2+y^2(x+y+z)+y(x+y+z)^2+xy(x+y+z)\\
										&= x^2+y(x+y+z)(y+(x+y+z)+x)\\
										&= x^2+y(x+y+z)z	&=g,
			\end{alignat*}
		where $y+(x+y+z)+x = 2x+2y+z = z$ since $k$ is a field of characteristic $2$.
		In the same way as in Lemma \ref{symmetric morphisms}(a),
		$\varphi_k$ preserves the polynomials $g^{(i)}$ ($i \in \{0,...,m\}$),
		and the induced automorphism $\psi_k$ preserves $S_m$.
	\end{proof}
Now,
we prove the following lemma.
This lemma is one key point to prove that $Z_m^i$'s are irreducible for $0 \leq i \leq 3$.
	\begin{Lemma}\label{open irreducible and codimension}
	For $m, p, q, r \in \mathbb{Z}_{>0}$ with $m \geq 2p$,
	the following hold.
		\begin{itemize}
		\item[(a)]
			If $p \geq q > r$ and $2p = q + 2r$,
			then $\overline{(S_m \cap \mathbf{V}(L_{pqr})) \cap\mathbf{D}(z_r)}$ is irreducible and has codimension $m-p+q+r+1 = m + p - r +1$ in $(\mathbb{A}^3)_m$.
		\item[(b)]
			If $p \geq r > q$ and $2p = 2q+r$,
			then $\overline{(S_m \cap \mathbf{V}(L_{pqr})) \cap\mathbf{D}(y_q)}$ is irreducible and has codimension $m-p+q+r+1 = m+p-q+1$ in $(\mathbb{A}^3)_m$.
		\item[(c)]
			If $p > q=r$ and $2p > 3q$,
			then
				\begin{alignat*}{5}
				\overline{(S_m \cap \mathbf{V}(L_{pqq} + \langle y_q+z_q \rangle)) \cap\mathbf{D}(y_q)}
				= \overline{(S_m \cap \mathbf{V}(L_{pqq} + \langle y_q+z_q \rangle)) \cap\mathbf{D}(z_q)}
				\end{alignat*}
			holds,
			and this set is irreducible of codimension $m + p - q +1$ in $(\mathbb{A}^3)_m$.
		\item[(d)]
			If $p > q=r$ and $2p = 3q$,
			then
				\begin{center}
				$\overline{(S_m \cap \mathbf{V}(L_{pqq})) \cap\mathbf{D}(y_q)} = \overline{(S_m \cap \mathbf{V}(L_{pqq})) \cap\mathbf{D}(z_q)}$
				\end{center}
			holds,
			and this set is irreducible of codimension $m-p+2q+1 = m+p-q+1$ in $(\mathbb{A}^3)_m$.
			If furthermore $m = 2p$,
			then
				\begin{center}
					$\overline{(S_m \cap \mathbf{V}(L_{pqq})) \cap\mathbf{D}(y_q)} = S_m \cap \mathbf{V}(L_{pqq}) = \mathbf{V}(L_{pqq} + \langle g_{pqq}^{(2p)} \rangle)$.
				\end{center}
		\end{itemize}
	\end{Lemma}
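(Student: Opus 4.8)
The strategy, in every case, is the one used in the proof of Proposition~\ref{primeness of the open ideals}: reduce $g^{(l)}$ modulo $L_{pqr}$ (and modulo the extra linear relation $y_q+z_q$ in case~(3)) and use Lemma~\ref{reduction of g^{(l)} modulo L_{pqr}} to exhibit a triangular system which, after inverting $z_r$ (resp.\ $y_q$), lets us solve for a maximal ``staircase'' of $y$-variables (resp.\ $z$-variables), one per equation. This identifies the coordinate ring of the relevant open subset with a localization of a polynomial ring over $k$, hence proves irreducibility, and the number of eliminated variables yields the codimension. The new $xyz$-monomial of $g$ never disturbs the staircase: in each term $x_uy_vz_w$ produced by $\mathbf{x}_p\mathbf{y}_q\mathbf{z}_r$ the $y$-index $v$ (resp.\ $z$-index $w$) lies strictly below that of the leading staircase monomial, precisely by the inequalities $p>r$ (resp.\ $p>q$) encoded in Lemma~\ref{reduction of g^{(l)} modulo L_{pqr}}(4)--(7).

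For~(1): modulo $L_{pqr}$ we have $g^{(l)}\equiv0$ for $l<2p$ by Lemma~\ref{reduction of g^{(l)} modulo L_{pqr}}(1) (note $2p=q+2r<2q+r$), the only monomials of $g$ contributing in degree $2p$ are $x^2$ and $yz^2$, so $g^{(2p)}\equiv x_p^2+y_qz_r^2$, and $T_y(g^{(l)})=y_{l-2r}z_r^2$ for $2p\le l\le m$ by Lemma~\ref{reduction of g^{(l)} modulo L_{pqr}}(4). Working in $(R_m)_{z_r}$ one solves successively for $y_q,y_{q+1},\dots,y_{m-2r}$, so
\[
\overline{\mathbf{V}(L_{pqr}+\langle g^{(0)},\dots,g^{(m)}\rangle)\cap\mathbf{D}(z_r)}
\]
is the closure of an open subvariety of an affine space of dimension $2m-p+r+2$, hence irreducible of codimension $3(m+1)-(2m-p+r+2)=m+p-r+1=m-p+q+r+1$. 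Part~(2) then follows by applying the symmetry $\psi_1$ of Remark~\ref{symmetric morphisms of D_4^1}, which interchanges $y$ and $z$, fixes $g$, and sends $L_{prq}$ to $L_{pqr}$: it reduces~(2) to~(1) with $(p,q,r)$ replaced by $(p,r,q)$.

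For~(3): since $y_q+z_q=0$ forces $z_q=y_q$ in characteristic $2$, we get $\mathbf{D}(y_q)\cap\mathbf{V}(y_q+z_q)=\mathbf{D}(z_q)\cap\mathbf{V}(y_q+z_q)$, which is the asserted equality. Now reduce modulo $L_{pqq}+\langle y_q+z_q\rangle$, i.e.\ substitute $z_q=y_q$ into $G_{pqq}^{(l)}$: for $l<3q$ we have $g^{(l)}\equiv0$ by Lemma~\ref{reduction of g^{(l)} modulo L_{pqr}}(1), while $G_{pqq}^{(3q)}=y_q^2z_q+y_qz_q^2$ becomes $y_q^3+y_q^3=0$ after the substitution, so $g^{(l)}\equiv0$ for $l\le3q$; for $l>3q$ the substitution raises no $y$-index, so $T_y(g^{(l)})=y_{l-2q}y_q^2$ by Lemma~\ref{reduction of g^{(l)} modulo L_{pqr}}(6), and the hypothesis $2p>3q$ guarantees that no relation forces any $x_u$ to vanish. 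Solving for $y_{q+1},\dots,y_{m-2q}$ in $(R_m)_{y_q}$ yields irreducibility, of codimension $(p+2q+1)+(m-3q)=m+p-q+1$.

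Part~(4) is where the method genuinely breaks down, and I expect it to be the main obstacle: at $l=2p=3q$ the relation $g^{(2p)}\equiv x_p^2+y_q^2z_q+y_qz_q^2$ (Lemma~\ref{reduction of g^{(l)} modulo L_{pqr}}(3)) is not linear in any variable, so no staircase step is available at that degree. The plan is to solve only $y_{q+1},\dots,y_{m-2q}$ from $g^{(2p+1)},\dots,g^{(m)}$ in $(R_m)_{y_q}$ (using $T_y(g^{(l)})=y_{l-2q}z_q^2$ for $l>2p$, Lemma~\ref{reduction of g^{(l)} modulo L_{pqr}}(6)) and to observe that the one remaining constraint is $x_p^2+y_q^2z_q+y_qz_q^2=0$ with $y_q\ne0$. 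Thus $\mathbf{V}(L_{pqq}+\langle g^{(0)},\dots,g^{(m)}\rangle)\cap\mathbf{D}(y_q)$ is isomorphic to $\bigl(\mathbf{V}(x_p^2+y_q^2z_q+y_qz_q^2)\cap\{y_q\ne0\}\bigr)\times\mathbb{A}^N$ for a suitable $N$; since the $D_4^0$-surface $\mathbf{V}(x_p^2+y_q^2z_q+y_qz_q^2)$ is irreducible by Remark~\ref{remark of the irreducibility of the singular surface of D_4^0}, a nonempty open subset of it is irreducible, so this product and its closure are irreducible, of codimension $(p+2q)+(m-3q)+1=m+p-q+1=m-p+2q+1$. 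Finally, this set and the corresponding one for $\mathbf{D}(z_q)$ both contain the locus where $y_qz_q\ne0$, which is nonempty (for generic $y_q,z_q$ one solves $x_p^2=y_qz_q(y_q+z_q)$) and hence dense in each of the two irreducible sets; therefore their closures agree, which is the displayed equality in~(4).
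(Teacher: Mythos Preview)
Your arguments for (1), (2), and (3) are correct and follow the paper's approach closely; the only minor variation is that in~(3) you solve the triangular system for the $y$-variables (after the substitution $z_q\mapsto y_q$, which turns the leading coefficient into $y_q^2$), whereas the paper solves for the $z$-variables using $T_z(G_{pqq}^{(l)})=y_q^2z_{l-2q}$ directly. Both work.

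Part~(4), however, has a genuine gap. You propose to solve for $y_{q+1},\dots,y_{m-2q}$ in $(R_m)_{y_q}$ using $T_y(g^{(l)})=y_{l-2q}z_q^2$. But the coefficient of $y_{l-2q}$ here is $z_q^2$, and $z_q$ is \emph{not} a unit in $(R_m)_{y_q}$: the locus $\{y_q\neq 0,\ z_q=0,\ x_p=0\}$ lies in $\mathbf{V}(G_{pqq}^{(2p)})\cap\mathbf{D}(y_q)$, and on that locus the relation $y_{l-2q}z_q^2+h^{(l)}=0$ collapses to $h^{(l)}=0$, imposing a constraint on the remaining variables while leaving $y_{l-2q}$ free. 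So the claimed product description $\bigl(\mathbf{V}(G_{pqq}^{(2p)})\cap\{y_q\neq 0\}\bigr)\times\mathbb{A}^N$ does not follow from the solving procedure you describe.

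The repair is immediate and is precisely what the paper does: switch to $T_z$. By Lemma~\ref{reduction of g^{(l)} modulo L_{pqr}}(7) one has $T_z(G_{pqq}^{(l)})=y_q^2 z_{l-2q}$ for $l>3q=2p$, and now the coefficient $y_q^2$ \emph{is} a unit in the localization at $y_q$. Working over the integral domain $Q_{y_q}$ with $Q=k[x_p,y_q,z_q]/\langle G_{pqq}^{(2p)}\rangle$, one solves triangularly for $z_{q+1},\dots,z_{m-2q}$ and obtains the asserted irreducibility and codimension; your product picture then holds, but with the roles of the higher $y$- and $z$-variables swapped. Your closing density argument for the equality of the two closures is correct as stated.
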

	\begin{proof}
	Before proving the lemma,
	note that
		\begin{align*}
			S_m \cap \mathbf{V}(L_{pqr}) = \mathbf{V}(L_{pqr}+\langle g^{(0)},...,g^{(m)} \rangle) = \mathbf{V}(L_{pqr} + \langle g_{pqr}^{(0)},...,g_{pqr}^{(m)} \rangle)
		\end{align*}
	by Remark \ref{座標関数を含むイデアルの簡略化}.
	\medskip\\
	(a)
	By Lemma \ref{reduction of f^{(l)} g^{(l)} modulo L_{pqr}}(e),
	for an integer $l$ satisfying $2p \leq l \leq m$,
	there exists a polynomial $h^{(l)} \in k[x_p,...,x_l,y_q,...,y_{l-2r-1},z_r,...,z_l]$ such that
		\begin{center}
			$g_{pqr}^{(l)} = y_{l-2r}z_{r}^2 + h^{(l)}$.
		\end{center}
	On the other hand,
	if $l < 2p$,
	then we have $l < 2p = q+2r < 2q+r$.
	Hence by Lemma \ref{reduction of f^{(l)} g^{(l)} modulo L_{pqr}}(a),
	$g_{pqr}^{(l)} = 0$ and $(S_m \cap \mathbf{V}(L_{pqr})) \cap \mathbf{D}(z_r)$ is defined by
		\begin{alignat}{7} \label{商空間上での定義イデアル}
			(L_{pqr} + \langle g^{(0)},...,g^{(m)} \rangle) \cdot (R_m)_{z_r}
			=& L_{pqr} \cdot (R_m)_{z_r} + \left\langle y_{2p-2r}+\frac{h^{(2p)}}{z_r^2}, y_{2p+1-2r}+\frac{h^{(2p+1)}}{z_r^2},..., y_{m-2r}+\frac{h^{(m)}}{z_r^2} \right\rangle
		\end{alignat}
	in $\mathbf{D}(z_r) \subset (\mathbb{A}^3)_m$.
	This ideal is prime of height $p+q+r+(m-2p+1) = m-p+q+r+1 = m+p-r+1$.
	Therefore,
	$\overline{(S_m \cap \mathbf{V}(L_{pqr})) \cap\mathbf{D}(z_r)}$ is irreducible of codimension $m+p-r+1$.
	\medskip\\
	(b)
	Using the automorphism $\varphi_1$,
	we see that (b) follows from (a).
	\medskip\\
	(c)
	First,
	we note that
	$y_q = z_q$ holds on $S_m \cap \mathbf{V}(L_{pqq} + \langle y_q+z_q \rangle)$,
	so
		\begin{center}
			$\gamma \in (S_m \cap \mathbf{V}(L_{pqq} + \langle y_q+z_q \rangle)) \cap\mathbf{D}(y_q) \Leftrightarrow$
			$\gamma \in (S_m \cap \mathbf{V}(L_{pqq} + \langle y_q+z_q \rangle)) \cap\mathbf{D}(z_q)$.
		\end{center}
	Hence we have
		\begin{align*}
			\overline{(S_m \cap \mathbf{V}(L_{pqq} + \langle y_q+z_q \rangle)) \cap\mathbf{D}(y_q)}
			&= \overline{(S_m \cap \mathbf{V}(L_{pqq} + \langle y_q+z_q \rangle)) \cap\mathbf{D}(z_q)}.
		\end{align*}
	
	Next,
	we prove that the closed subset $\overline{(S_m \cap \mathbf{V}(L_{pqq} + \langle y_q+z_q \rangle)) \cap\mathbf{D}(y_q)}$ is irreducible of codimension $m+p-q+1$.
	It is enough to show that $(L_{pqq} + \langle y_q+z_q \rangle + \langle g^{(0)},...,g^{(m)} \rangle)\cdot (R_m)_{y_q}$ is prime of height $m+p-q+1$.
	Let us look at $g_{pqq}^{(l)}$ for $l$ with $l \leq m$.
	If $l < 3q$,
	then $l < 3q < 2p$.
	So $g_{pqq}^{(l)} = 0$ by Lemma \ref{reduction of f^{(l)} g^{(l)} modulo L_{pqr}}(a).
	If $l = 3q$,
	then $g_{pqq}^{(3q)} = y_q^2z_q+y_qz_q^2$ by Lemma \ref{reduction of f^{(l)} g^{(l)} modulo L_{pqr}}(d),
	and $g_{pqq}^{(3q)} \in \langle y_q+z_q \rangle$.
	Finally,
	if $3q < l \leq m$,
	$T_z(g_{pqq}^{(l)}) = y_q^2z_{l-2q}$ by Lemma \ref{reduction of f^{(l)} g^{(l)} modulo L_{pqr}}(h).
	So there exists a polynomial $h^{(l)} \in k[x_p,...,x_m,y_q,...,y_m,z_q,...,z_{l-2q-1}]$ such that
		\begin{center}
			$g_{pqq}^{(l)} = y_q^2z_{l-2q} + h^{(l)}$.
		\end{center}
	Therefore,
	$(S_m \cap \mathbf{V}(L_{pqq} + \langle y_q+z_q \rangle)) \cap\mathbf{D}(y_q)$ is defined by
	\begin{alignat}{7}
			&	(L_{pqq} + \langle y_q+z_q \rangle + \langle g^{(0)},...,g^{(m)} \rangle) \cdot (R_m)_{y_q}\\
			=&	(L_{pqq} + \langle y_q+z_q \rangle) \cdot (R_m)_{y_q} + \left\langle z_{q+1}+\frac{h^{(3q+1)}}{y_q^2}, z_{q+2}+\frac{h^{(3q+2)}}{y_q^2},..., z_{m-2q}+\frac{h^{(m)}}{y_q^2} \right\rangle \label{open affine of y_q}
		\end{alignat}
	and this ideal is prime of height $p+q+q+1+(m-3q) = m+p-q+1$.
\medskip\\
	(d)
	First,
	we prove the following claim.
	\begin{claim}
	$\overline{(S_m \cap \mathbf{V}(L_{pqq})) \cap\mathbf{D}(y_q)}$ is irreducible of codimension $m+p-q+1$.
	\end{claim}
	We think of the defining ideal $L_{pqq} + \langle g^{(0)},...,g^{(m)} \rangle$ of $S_m \cap \mathbf{V}(L_{pqq})$ as $L_{pqq} + \langle g^{(0)},...,g^{(2p)} \rangle + \langle g^{(2p+1)},...,g^{(m)} \rangle$.
	By Lemma \ref{reduction of f^{(l)} g^{(l)} modulo L_{pqr}}(a) and Lemma \ref{reduction of f^{(l)} g^{(l)} modulo L_{pqr}}(c),
		\begin{center}
		$L_{pqq} + \langle g^{(0)},...,g^{(2p)} \rangle = L_{pqq} + \langle g^{(2p)}_{pqq} \rangle$,
		\end{center}
	with $g^{(2p)}_{pqq} = x_p^2+y_q^2z_q+y_qz_q^2$ and by Notation \ref{modulo polynomials},
		\begin{center}
		$\{ g_{pqq}^{(2p+1)},...,g_{pqq}^{(m)} \} \subset k[x_p,...,x_m,y_q,...,y_m,z_q,...,z_m]$.
		\end{center}
	Now,
	we consider the following residue ring:
		\begin{align*}
		Q	&= k[x_0,...,x_p,y_0,...,y_q,z_0,...,z_q]/\langle x_0,...,x_{p-1},y_0,...,y_{q-1},z_0,...,z_{q-1}, g_{pqq}^{(2p)} \rangle\\
			&\cong k[x_p,y_q,z_q]/\langle g_{pqq}^{(2p)} \rangle.
		\end{align*}
	Since $g_{pqq}^{(2p)}$ is irreducible in $k[x_p,y_q,z_q]$ by Remark \ref{remark of the difference between f and g},
	this ring is an integral domain.
	Since
		\begin{alignat*}{5}
			Q_m 	&:= Q[x_{p+1},...,x_m,y_{q+1},...,y_m,z_{q+1},...,z_m]\\
					&\cong R_m/(\langle x_0,...,x_{p-1},y_0,...,y_{q-1},z_0,...,z_{q-1}, g_{pqq}^{(2p)} \rangle),
		\end{alignat*}
	$L_{pqq} + \langle g_{pqq}^{(2p)} \rangle$ is prime of height $p+q+q+1 = 3p-q+1$ and the claim holds if $m = 2p$.
	Moreover,
	we have $y_q \not\in \langle g_{pqq}^{(2p)} \rangle$ and $L_{pqq} + \langle g_{pqq}^{(2p)} \rangle$ is prime,
	so $\overline{(S_m \cap \mathbf{V}(L_{pqq})) \cap\mathbf{D}(y_q)} = S_m \cap \mathbf{V}(L_{pqq})$.
	
	In the following,
	we assume $m > 2p$.
	Let $\overline{g_{pqq}^{(2p+1)}},...,\overline{g_{pqq}^{(m)}}$ and $\overline{y_q}$ be the images of $g_{pqq}^{(2p+1)},...,g_{pqq}^{(m)}$ and $y_q$ by the natural surjection $R_m \rightarrow Q_m$.
	What we have to show is that $\langle \overline{g_{pqq}^{(2p+1)}},...,\overline{g_{pqq}^{(m)}} \rangle \cdot (Q_m)_{\overline{y_q}}$ is prime of height $m-2p$.
	Here,
	we note that $\overline{y_q} \neq 0$ in $Q_m$ since $y_q \not\in \langle g_{pqq}^{(2p)} \rangle$ and that $Q_{\overline{y_q}}$ is also an integral domain.
	Assume $2p+1 \leq l \leq m$.
	Then we have $l \geq 2p+1 > 2p = 3q$.
	Hence there exists $h^{(l)} \in Q[x_{p+1},...,x_l,y_{q+1},...,y_l,z_{q+1},...,z_{l-2q-1}]$ such that $\overline{g_{pqq}^{(l)}} = \overline{y_q}^2z_{l-2q} + h^{(l)}$ by Lemma \ref{reduction of f^{(l)} g^{(l)} modulo L_{pqr}}(h).
	Therefore,
	from
		\begin{align*}
			\langle \overline{g_{pqq}^{(2p+1)}},...,\overline{g_{pqq}^{(m)}} \rangle \cdot (Q_m)_{\overline{y_q}} 
			&= \left\langle z_{2p-2q+1}+\frac{h^{(2p+1)}}{\overline{y_q}^2}, z_{2p-2q+2}+\frac{h^{(2p+2)}}{\overline{y_q}^2},..., z_{m-2q}+\frac{h^{(m)}}{\overline{y_q}^2} \right\rangle,
		\end{align*}
	it follows that $\langle \overline{g_{pqq}^{(2p+1)}},...,\overline{g_{pqq}^{(m)}} \rangle \cdot (Q_m)_{\overline{y_q}}$ is prime of height $m-2p$ in $(Q_m)_{\overline{y_q}} \cong Q_{\overline{y_q}}[x_{p+1},...,x_m,$ $y_{q+1},...,y_m,z_{q+1},...,z_m]$.
	This proves the claim.
	
	Finally,
	we prove
		\begin{center}
		$\overline{(S_m \cap \mathbf{V}(L_{pqq})) \cap\mathbf{D}(y_q)} = \overline{(S_m \cap \mathbf{V}(L_{pqq})) \cap\mathbf{D}(z_q)}$.
		\end{center}
	Note that the right hand side is also irreducible by the symmetry.
	From $g(0,t^q,t^q) = 0$,
	it follows that $(0,t^q,t^q) \in (S_m \cap \mathbf{V}(L_{pqq})) \cap\mathbf{D}(y_qz_q)$.
	Since an irreducible closed set is the closure of its nonempty open subset,
	we have
		\begin{align*}
			\overline{(S_m \cap \mathbf{V}(L_{pqq})) \cap\mathbf{D}(y_q)} &= \overline{(S_m \cap \mathbf{V}(L_{pqq})) \cap\mathbf{D}(y_qz_q)}
			= \overline{(S_m \cap \mathbf{V}(L_{pqq})) \cap\mathbf{D}(z_q)}.
		\end{align*}
	\end{proof}
	\begin{Coro}\label{calculation of the codimension of three irreducible components of S_m^0 of D_4^1}
	For $m \geq 3$,
	the ideals $J_m^1\cdot(R_m)_{z_1}$,
	$J_m^2\cdot(R_m)_{y_1}$ and $J_m^3\cdot(R_m)_{y_1}$ are prime,
	and the closed subsets $\overline{\mathbf{V}(J_m^1) \cap \mathbf{D}({z_1})}$,
	$\overline{\mathbf{V}(J_m^2) \cap \mathbf{D}({y_1})}$ and $\overline{\mathbf{V}(J_m^3) \cap \mathbf{D}({y_1})}$ are irreducible of dimension $2m+1$ (or equivalently of codimension $m+2$ in $(\mathbb{A}^3)_m$).
	\end{Coro}
	\begin{proof}
	For $m=3$,
	it is easy to see that $J_3^1$,
	$J_3^2$ and $J_3^3$ are prime of height $5$.
	For instance,
	by Lemma \ref{reduction of f^{(l)} g^{(l)} modulo L_{pqr}}(a),
	$f^{(0)}_{221} = f^{(1)}_{221} = f^{(2)}_{221} = f^{(3)}_{221} = 0$.
	Hence $J_3^1 = L_{221}$ is prime of height $5$.
	Similarly for $J_3^2$ and $J_3^3$.
	
	For $m \geq 4$,
	we apply Lemma \ref{open irreducible and codimension}(a) with $p = q = 2$ and $r=1$ to show that $\overline{\mathbf{V}(J_m^1) \cap \mathbf{D}({z_1})}$ is irreducible of codimension
		\begin{center}
			$m+2-1+1 = m+2$.
		\end{center}
	We see that $J_m^1\cdot(R_m)_{z_1}$ is prime from \eqref{商空間上での定義イデアル} in the proof of Lemma \ref{open irreducible and codimension}(a).
	
	Using Lemma \ref{symmetric morphisms of D_4^1}(b),
	we also see that $J_m^2\cdot(R_m)_{y_1}$ and $J_m^3\cdot(R_m)_{y_1}$ are prime
	and $\overline{\mathbf{V}(J_m^2) \cap \mathbf{D}({y_1})}$ and $\overline{\mathbf{V}(J_m^3) \cap \mathbf{D}({y_1})}$ are irreducible of codimension $m+2$.
	\end{proof}

Now we define ideals in $R_m$ that will turn out to be defining ideals of the irreducible components of $S_m^0$ for $m \geq 5$.
	\begin{Def} \label{definition of the irreducible components of singular fiber of D_4^1}
		For $m \geq 1$,
		we define
		\begin{alignat}{5}
			\label{define of the ideal of irreducible components of D_4^1 0}	I_m^0 &= L_{222} + \langle g^{(0)},...,g^{(m)} \rangle,\\
			\label{define of the ideal of irreducible components of D_4^1 1}	I_m^1 &= J_m^1\cdot(R_m)_{z_1} \cap R_m,\\
			\label{define of the ideal of irreducible components of D_4^1 2}	I_m^2 &= J_m^2\cdot(R_m)_{y_1} \cap R_m,\\
			\label{define of the ideal of irreducible components of D_4^1 3}	I_m^3 &= J_m^3\cdot(R_m)_{y_1} \cap R_m.
		\end{alignat}
	Furthermore,
	we define closed subsets
		\begin{center}
			$Z_m^i = \mathbf{V}(I_m^i)$
		\end{center}
		for $0 \leq i \leq 3$.
	\end{Def}
	\begin{Rem}\label{D_4^1におけるI_m^0の変形}
		By Lemma \ref{reduction of f^{(l)} g^{(l)} modulo L_{pqr}}(b),
		we have $f^{(4)}_{222} = x_2^2$.
		Hence,
		if $m \geq 4$,
			\begin{center}
				$Z_m^0 = \mathbf{V}(L_{322} + \langle g^{(0)},...,g^{(m)} \rangle) = S_m \cap \mathbf{V}(L_{322})$.
			\end{center}
		
		Assume $2p = 3q$ and $m \geq 2p$.
By Remark \ref{座標関数を含むイデアルの簡略化} and Lemma \ref{reduction of f^{(l)} g^{(l)} modulo L_{pqr}}(a),
	\begin{align}\label{property of G_{pqq}^{m}}
		L_{pqq}+\langle g^{(0)},...,g^{(m)} \rangle = L_{pqq}+\langle g_{pqq}^{(0)},...,g_{pqq}^{(m)} \rangle = L_{pqq}+\langle g_{pqq}^{(2p)},...,g_{pqq}^{(m)} \rangle.
	\end{align}
The same arguments show that
	\begin{align}\label{irreducible components 2p-1}
		L_{pqq}+\langle g^{(0)},...,g^{(2p-1)} \rangle = L_{pqq}.
	\end{align}
	\end{Rem}

	\begin{Lemma}\label{irreducibility of Z_m^0}
	For $m \geq 5$,
	$Z_m^0$ is irreducible of dimension $2m+1$ (or equivalently of codimension $m+2$ in $(\mathbb{A}^3)_m$).
	\end{Lemma}
The strategy of the proof of this lemma is as follows:
Let $u = \lfloor m/6 \rfloor$.
We divide $Z_m^0$ into $Z_m^0 \cap \Big(\mathbf{V}(L_{3,u',u'}) \cap \big(\mathbf{D}(y_{u'}) \cup \mathbf{D}(z_{u'})\big) \Big)$ for $u' < 2u$ and $Z_m^0 \cap \mathbf{V}(L_{3,2u,2u})$ and calculate their codimensions.
The codimensions of the former are calculated in the same way as in Lemma \ref{open irreducible and codimension}.
As for the latter,
we calculate these sets in Proposition \ref{center irreducible components of D_4^1}.
On the other hand,
we have an upper bound for the codimension of the irreducible components of $Z_m^0$. 
Then we can argue as in \cite{M1} to conclude that $Z_m^0$ is irreducible.

Before proving Lemma \ref{irreducibility of Z_m^0},
we show the following proposition.
	\begin{Prop}\label{center irreducible components of D_4^1}
	Let $p,q$ and $u$ be positive integers with $p = 3u$ and $q = 2u$,
	and assume $2p \leq m < 2(p+3)$.
	We set
		\begin{alignat*}{7}
			U_m	&:= \mathbf{V}(L_{pqq} + \langle g_{pqq}^{(2p)},...,g_{pqq}^{(m)} \rangle)	&&= S_m \cap \mathbf{V}(L_{pqq}),\\
			V_m	&:= \mathbf{V}(L_{p,q+1,q+1} + \langle g_{p,q+1,q+1}^{(2p)},...,g_{p,q+1,q+1}^{(m)} \rangle)	&&= S_m \cap \mathbf{V}(L_{p,q+1,q+1}),\\
			W_m	&:= \mathbf{V}(L_{p+2,q+2,q+2} + \langle g_{p+2,q+2,q+2}^{(2p)},...,g_{p+2,q+2,q+2}^{(m)} \rangle)	&&= S_m \cap \mathbf{V}(L_{p+2,q+2,q+2}).
		\end{alignat*}
	\begin{itemize}
		\item[(a)]
			The codimension of the closed subset $W_m$ in $(\mathbb{A}^3)_m$ is greater than $m+u+1$.
		\item[(b)]
			The codimension of the closed subset $V_m$ in $(\mathbb{A}^3)_m$ is greater than $m+u+1$.
		\item[(c)]
			$U_m$ is irreducible of codimension $m+u+1$ in $(\mathbb{A}^3)_m$,
	and
		\begin{align*}
			U_m = \overline{(S_m \cap \mathbf{V}(L_{pqq})) \cap \mathbf{D}(y_q)} =\overline{(S_m \cap \mathbf{V}(L_{pqq})) \cap \mathbf{D}(z_q)}.
		\end{align*}
	\end{itemize}
	\end{Prop}
	\begin{proof}
	We fix $u$ (and $p,q$),
	and deal with the cases $m = 2p, 2p+1,...,2p+5$ in order.
	\medskip\\
	(a)
	The cases $m \leq 2p+3$.
	By Lemma \ref{reduction of f^{(l)} g^{(l)} modulo L_{pqr}}(a),
	we have
		\begin{center}
			$g_{p+2,q+2,q+2}^{(2p)} = \cdots = g_{p+2,q+2,q+2}^{(2p+3)} = 0$.
		\end{center}
	Hence
		\begin{align*}
			L_{p+2,q+2,q+2} + \langle g^{(2p)}_{p+2,q+2,q+2},...,g^{(m)}_{p+2,q+2,q+2} \rangle
			&=	L_{p+2,q+2,q+2}
		\end{align*}
	and
		\begin{center}
			$\mathrm{codim}_{(\mathbb{A}^3)_m}\ W_m = p+2+q+2+q+2 = 7u+6 > 7u+4 \geq m+u+1$.
		\end{center}
		
	The case $m = 2p+4$.
	By Lemma \ref{reduction of f^{(l)} g^{(l)} modulo L_{pqr}}(b),
	$g_{p+2,q+2,q+2}^{(2p+4)} = x_{p+2}^2$.
	Hence
		\begin{center}
			$W_{2p+4} = \mathbf{V}(L_{p+2,q+2,q+2}+\langle x_{p+2}^2 \rangle) = \mathbf{V}(L_{p+3,q+2,q+2})$.
		\end{center}
	Thus,
	$W_{2p+4}$ is irreducible of codimension $p+3+q+2+q+2 = 7u+7 > 7u+5 = (2p+4)+u+1$ in $(\mathbb{A}^3)_m$.
	
	The case $m = 2p+5$.
	We note that
		\begin{center}
			$W_{2p+5} = (\pi_{2p+5,2p+4}^S)^{-1}(W_{2p+4}) = \mathbf{V}(L_{p+3,q+2,q+2} + \langle g_{p+3,q+2,q+2}^{(2p+5)} \rangle)$.
		\end{center}
	By Lemma \ref{reduction of f^{(l)} g^{(l)} modulo L_{pqr}}(a),
	$g_{p+3,q+2,q+2}^{(2p+5)} = 0$.
	Hence $W_{2p+5}$ is irreducible of codimension $p+3+q+2+q+2 = 7u+7 > 7u+6 = (2p+5)+u+1$ in $(\mathbb{A}^3)_m$.
	This completes the proof of (a).
	\medskip\\
	(b)
	The case $m = 2p$.
	By Lemma \ref{reduction of f^{(l)} g^{(l)} modulo L_{pqr}}(b),
	$g_{p,q+1,q+1}^{(2p)} = x_p^2$.
	Hence
		\begin{center}
			$V_{2p} = \mathbf{V}(L_{p,q+1,q+1} + \langle x_p^2 \rangle) = \mathbf{V}(L_{p+1,q+1,q+1})$
		\end{center}
	and $V_{2p}$ is of codimension $p+1+q+1+q+1 = 7u+3 > 7u+1 = 2p+u+1$.
	
	We note that,
	for $m \geq 2p+1$,
		\begin{center}
			$V_m = (\pi_{m,2p}^S)^{-1}(V_{2p}) = \mathbf{V}(L_{p+1,q+1,q+1}+\langle g_{p+1,q+1,q+1}^{(2p+1)},...,g_{p+1,q+1,q+1}^{(m)} \rangle)$.
		\end{center}

	The case $m = 2p+1$.
	By Lemma \ref{reduction of f^{(l)} g^{(l)} modulo L_{pqr}}(a),
	$g_{p+1,q+1,q+1}^{(2p+1)} = 0$ and $V_{2p+1}$ is of codimension
		\begin{center}
			$p+1+q+1+q+1 = 7u+3 > 7u+2 = (2p+1)+u+1$.
		\end{center}
	
	The case $m = 2p+2$.
	By Lemma \ref{reduction of f^{(l)} g^{(l)} modulo L_{pqr}}(b),
	$g_{p+1,q+1,q+1}^{(2p+2)} = x_{p+1}^2$.
	Hence
		\begin{center}
			$V_{2p+2} = \mathbf{V}(L_{p+1,q+1,q+1}+\langle x_{p+1}^2 \rangle) = \mathbf{V}(L_{p+2,q+1,q+1})$
		\end{center}
	and $V_{2p+2}$ is of codimension $p+2+q+1+q+1 = 7u+4 > 7u+3 = (2p+2)+u+1$.
	
	The case $m = 2p+3$.
	By Lemma \ref{reduction of f^{(l)} g^{(l)} modulo L_{pqr}}(d),
	$g_{p+2,q+1,q+1}^{(2p+3)} = y_{q+1}z_{q+1}(y_{q+1}+z_{q+1})$.
	Hence
		\begin{alignat*}{7}
			V_{2p+3}	&=	\mathbf{V}(L_{p+2,q+1,q+1} + \langle y_{q+1}z_{q+1}(y_{q+1}+z_{q+1}) \rangle)\\
						&=	\mathbf{V}(L_{p+2,q+2,q+1}) \cup \mathbf{V}(L_{p+2,q+1,q+2}) \cup \mathbf{V}(L_{p+2,q+1,q+1} + \langle y_{q+1}+z_{q+1} \rangle).
		\end{alignat*}
	Thus
		\begin{center}
			$\mathrm{codim}_{(\mathbb{A}^3)_m}\ V_{2p+3} = 7u+5 > 7u+4 = (2p+3)+u+1$.
		\end{center}
	
	The cases $m \geq 2p+4$.
	Since $V_m = (\pi_{m,2p+3}^S)^{-1}(V_{2p+3}) = (S_m \cap \mathbf{V}(L_{p+2,q+2,q+1})) \cup (S_m \cap \mathbf{V}(L_{p+2,q+1,q+2})) \cup (S_m \cap \mathbf{V}(L_{p+2,q+1,q+1} + \langle y_{q+1}+z_{q+1} \rangle))$ and $(\mathbb{A}^3)_m = \mathbf{D}(y_{q+1}) \cup \mathbf{D}(z_{q+1}) \cup \mathbf{V}(y_{q+1},z_{q+1})$,
	we have
		\begin{alignat*}{7}
			V_m	&=	\overline{(S_m \cap \mathbf{V}(L_{p+2,q+2,q+1})) \cap \mathbf{D}(z_{q+1})} \cup \overline{(S_m \cap \mathbf{V}(L_{p+2,q+1,q+2})) \cap \mathbf{D}(y_{q+1})}\ \cup\\
				&\ \ \ \ \overline{(S_m \cap \mathbf{V}(L_{p+2,q+1,q+1} + \langle y_{q+1}+z_{q+1} \rangle)) \cap \mathbf{D}(y_{q+1})}\ \cup \\
				&\ \ \ \ \overline{(S_m \cap \mathbf{V}(L_{p+2,q+1,q+1} + \langle y_{q+1}+z_{q+1} \rangle)) \cap \mathbf{D}(z_{q+1})} \cup (V_m\cap \mathbf{V}(y_{q+1},z_{q+1})).
		\end{alignat*}
	Note that $V_m\cap \mathbf{V}(y_{q+1},z_{q+1}) = W_m$.
	By Lemma \ref{open irreducible and codimension}(a),
	(b) and (c) and the statement (a) above,
		\begin{center}
			$\mathrm{codim}_{(\mathbb{A}^3)_m}\ V_m > m+u+1$.
		\end{center}
	This complete the proof of (b).
	\medskip\\
	(c)
	Since $(\mathbb{A}^3)_m = \mathbf{D}(y_{q}) \cup \mathbf{D}(z_{q}) \cup \mathbf{V}(y_{q},z_{q})$ and $U_m$ is closed,
	we have
		\begin{alignat*}{5}
			U_m	&= U_m \cap (\mathbb{A}^3)_m
				&&= \overline{U_m\cap \mathbf{D}(y_q)} \cup  \overline{U_m\cap \mathbf{D}(z_q)} \cup (U_m \cap \mathbf{V}(y_q,z_q)).
		\end{alignat*}
	By Lemma \ref{open irreducible and codimension}(d),
	we have
		\begin{center}
			$\overline{U_m\cap \mathbf{D}(y_q)} =  \overline{U_m\cap \mathbf{D}(z_q)}$
		\end{center}
	and $\overline{U_m\cap \mathbf{D}(y_q)}$ is irreducible of codimension
		\begin{center}
			$m+p-q+1 = m+u+1$
		\end{center}
	in $(\mathbb{A}^3)_m$.
	Moreover,
	we have $U_m \cap \mathbf{V}(y_q,z_q) = V_m$.
	By the assertion (b),
	we have
		\begin{center}
			$\mathrm{codim}_{(\mathbb{A}^3)_m}\ V_m > m+u+1$.
		\end{center}
	We note that $U_m$ is defined by the ideal
	$L_{pqq} + \langle g_{pqq}^{(2p)},...,g_{pqq}^{(m)} \rangle$,
	which is generated by $p+q+q+(m-2p+1) = m+u+1$ elements.
	Thus,
	by Krull's height theorem,
	$V_m$ contains no irreducible components of $U_m$.
	Hence
		\begin{center}
			$U_m = \overline{U_m\cap \mathbf{D}(y_q)}$
		\end{center}
	is irreducible of
		\begin{center}
			$\mathrm{codim}_{(\mathbb{A}^3)_m}\ U_m = m+u+1$.
		\end{center}
	This complete the proof of (c) and the proof of the proposition.
	\end{proof}
	\begin{proof}[\indent\sc Proof of Lemma \ref{irreducibility of Z_m^0}.]
	First,
	we consider the case $m = 5$.
	Then we have
		\begin{align*}
			Z_m^0 = \mathbf{V}(L_{322})
		\end{align*}
	by Remark \ref{D_4^1におけるI_m^0の変形},
	and this closed subset is irreducible of codimension $3+2+2 = 7 = m+2$.
	
	Next,
	we prove the statement for $m \geq 6$.
	Let $u$ be the positive integer with $6u \leq m < 6(u+1)$.
	If $u = 1$,
	then by Remark \ref{D_4^1におけるI_m^0の変形} and Proposition \ref{center irreducible components of D_4^1},
	$Z_m^0 = \mathbf{V}(L_{322}+\langle g^{(0)},...,g^{(m)} \rangle) = U_m$ is irreducible of codimension $m+u+1 = m+2$.
	Hence this case was proven.
	In the following,
	we assume $u \geq 2$.
	We note that $\mathbf{V}(L_{322})$ is equal to
		\begin{align}\label{decomposition of singular locus}
			\bigcup_{u'=2}^{2u-1} \Big(\mathbf{V}(L_{3,u',u'}) \cap \big(\mathbf{D}(y_{u'})\cup \mathbf{D}(z_{u'})\big)\Big) \cup \mathbf{V}(L_{3,2u,2u}).
		\end{align}
	Since $Z_m^0$ can be written as $Z_m^0 \cap \mathbf{V}(L_{322})$ by Remark \ref{D_4^1におけるI_m^0の変形},
	it is the union of the following sets:
		\begin{itemize}
			\item[($\alpha$)]
				$Y_{u'} := Z_m^0 \cap \Big(\mathbf{V}(L_{3,u',u'}) \cap \big(\mathbf{D}(y_{u'})\cup \mathbf{D}(z_{u'})\big)\Big)$ ($u' = 2,3,...,2u-1$),
			\item[($\beta$)]
				$Y_{2u} := Z_m^0 \cap \mathbf{V}(L_{3,2u,2u})$.
		\end{itemize}
	Now,
	we will see the following claim.
	\begin{claim}
		\begin{itemize}
			\item[(a)]
				$Y_2$ is irreducible of codimension $m+2$ in $(\mathbb{A}^3)_m$.
			\item[(b)]
				For $u' = 3,4,...,2u$,
				$Y_{u'}$ is of codimension greater than $m+2$ in $(\mathbb{A}^3)_m$.
		\end{itemize}
	\end{claim}
	Note that $L_{322} + \langle g^{(0)},...,g^{(m)} \rangle = L_{322} + \langle g_{322}^{(6)},...,g_{322}^{(m)} \rangle$ by Remark \ref{座標関数を含むイデアルの簡略化} and by Remark \ref{D_4^1におけるI_m^0の変形}.
	This ideal is generated by $3+2+2+(m-5) = m+2$ elements,
	so we see that any irreducible component of $Z_m^0 = \mathbf{V}(I_m^0)$ is of codimension at most $m+2$ in $(\mathbb{A}^3)_m$.
	Thus if we can prove the above claim,
	then the proof is complete.
	
	Before proving the claim,
	we give another set of defining equations of $Z_m^0 \cap \mathbf{V}(L_{3,u',u'})$ for $u' = 2,3,...,2u$.
	Note that
		\begin{align*}
			Z_m^0 \cap \mathbf{V}(L_{3,u',u'})	&= \mathbf{V}(L_{3,u',u'}+\langle g^{(6)}_{3,u',u'},...,g^{(m)}_{3,u',u'} \rangle)
		\end{align*}
	by Remark \ref{D_4^1におけるI_m^0の変形}.
	Let us show that
		\begin{align}\label{L_{3,u'u'}の座標函数の詳細な計算}
			\sqrt{{L_{3,u',u'} + \langle g_{3,u',u'}^{(6)},...,g_{3,u',u'}^{(m)} \rangle}}	&= \sqrt{L_{\lceil 3u'/2 \rceil,u',u'} + \langle g_{3,u',u'}^{(6)},...,g_{3,u',u'}^{(m)} \rangle}\\
				&= \sqrt{L_{\lceil 3u'/2 \rceil,u',u'} + \langle g_{\lceil 3u'/2 \rceil,u',u'}^{(6)},...,g_{\lceil 3u'/2 \rceil,u',u'}^{(m)} \rangle}.
		\end{align}
	For the case $u' = 2$,
	then $\lceil 3u'/2 \rceil = 3$ and hence the assertion holds.
	
	For the cases $u' \geq 3$,
	it is suffices to show that
		\begin{align*}
			x_{l/2} \in \sqrt{{L_{3,u',u'} + \langle g_{3,u',u'}^{(6)},...,g_{3,u',u'}^{(m)} \rangle}}
		\end{align*}
	for $l = 6, 8, ..., 2(\lceil 3u'/2 \rceil-1)$.
	We prove this by induction.
	If $l = 6$,
	then $l = 6 < 9 \leq 3u'\ (\leq m)$ and hence $g^{(6)}_{3,u',u'} = x_3^2$ by Lemma \ref{reduction of f^{(l)} g^{(l)} modulo L_{pqr}}(b),
	and hence
		\begin{center}
			$x_3 \in \sqrt{{L_{3,u',u'} + \langle g_{3,u',u'}^{(6)},...,g_{3,u',u'}^{(m)} \rangle}}$.
		\end{center}
	
	For $l \geq 8$,
	we assume that
		\begin{center}
			$x_{l'/2} \in \sqrt{{L_{3,u',u'} + \langle g_{3,u',u'}^{(6)},...,g_{3,u',u'}^{(m)} \rangle}}$
		\end{center}
	holds for even integers $l' < l$.
	Then we have
		\begin{center}
			$\sqrt{{L_{3,u',u'} + \langle g_{3,u',u'}^{(6)},...,g_{3,u',u'}^{(m)} \rangle}} = \sqrt{L_{l/2,u',u'} + \langle g_{l/2,u',u'}^{(6)},...,g_{l/2,u',u'}^{(m)} \rangle}$.
		\end{center}
	Since $l/2 < \lceil 3u'/2 \rceil < 2u'$ for $u' > 0$,
	we have $l \leq 2(\lceil 3u'/2 \rceil-1) \leq 2((3u'+1)/2-1) = 3u'+1 -2 = 3u'-1 < 3u'\ (\leq m)$.
	Hence we have
		\begin{center}
			$g^{(l)}_{l/2,u',u'} = x_{l/2}^2$
		\end{center}
	and
		\begin{center}
			$x_{l/2} \in \sqrt{L_{l/2,u',u'} + \langle g_{l/2,u',u'}^{(6)},...,g_{l/2,u',u'}^{(m)} \rangle}$.
		\end{center}
	Therefore,
	we have
		\begin{align*}
			x_{l/2} \in \sqrt{{L_{3,u',u'} + \langle g_{3,u',u'}^{(6)},...,g_{3,u',u'}^{(m)} \rangle}}
		\end{align*}
	for $l = 6, 8, ..., 2(\lceil 3u'/2 \rceil-1)$ and (2) holds.
	
	Now,
	we prove the statements (a) and (b) of the claim with $u' \neq 2u$.
	If $u'$ is even,
	then,
	for $u'' = u'/2$,
	we have
		\begin{align*}
			\overline{Z_m^0 \cap \mathbf{V}(L_{3,u',u'}) \cap \mathbf{D}(y_{u'})} &= \overline{\mathbf{V}(L_{3u'',2u'',2u''} + \langle g_{3u'',2u'',2u''}^{(6)},...,g_{3u'',2u'',2u''}^{(m)} \rangle) \cap \mathbf{D}(y_{2u''})},\\
			\overline{Z_m^0 \cap \mathbf{V}(L_{3,u',u'}) \cap \mathbf{D}(z_{u'})} &= \overline{\mathbf{V}(L_{3u'',2u'',2u''} +\langle g_{3u'',2u'',2u''}^{(6)},...,g_{3u'',2u'',2u''}^{(m)} \rangle) \cap \mathbf{D}(z_{2u''})}
		\end{align*}
	by \eqref{L_{3,u'u'}の座標函数の詳細な計算} and these sets coincide and are irreducible of codimension $m - 3u'' +2(2u'') +1 = m+u''+1$ by Lemma \ref{open irreducible and codimension}(d) with $p = 3u''$ and $q = r = 2u''$.
	Hence,
	if $u' = 2$,
	i.e.,
	$u'' = 1$,
	then this set is of codimension $m+2$.
	If $u' \geq 4$ i.e.,
	$u'' \geq 2$,
	then this set is of codimension $m+u''+1 > m+2$.
	Thus,
	the case in which $u'$ is even is complete.
	
	Next,
	if $u'$ is odd,
	then we have
		\begin{align*}
			Z_m^0 \cap \mathbf{V}(L_{3,u',u'})
			=&\ \mathbf{V}(L_{\frac{3u'+1}{2},u',u'}+\langle g_{\frac{3u'+1}{2},u',u'}^{(6)},...,g_{\frac{3u'+1}{2},u',u'}^{(m)} \rangle).
		\end{align*}
	by \eqref{L_{3,u'u'}の座標函数の詳細な計算}.
	By Lemma \ref{reduction of f^{(l)} g^{(l)} modulo L_{pqr}}(a) and (d) with $p = (3u'+1)/2, q = u'$ and $r = u'$,
	we have $g_{\frac{3u'+1}{2},u',u'}^{l} = 0$ for $l < 3u'$ and $g_{\frac{3u'+1}{2},u',u'}^{(3u')} = y_{u'}z_{u'}(y_{u'}+z_{u'})$.
	Thus
		\begin{align*}\displaystyle
			Z_m^0 \cap \mathbf{V}(L_{3,u',u'})	&= 
			\bigcup_{i=1,2,3} \left(\mathbf{V}(L_{\frac{3u'+1}{2},u',u'} + \langle h_{u',i} \rangle + \langle g_{\frac{3u'+1}{2},u',u'}^{(3u'+1)},...,g_{\frac{3u'+1}{2},u',u'}^{(m)} \rangle) \right)
		\end{align*}
	where $h_{u',1} = y_{u'}$,
	$h_{u',2} = z_{u'}$ and $h_{u',3} = y_{u'}+z_{u'}$.
	Therefore,
		\begin{align*}
			 & \overline{Z_m^0 \cap \mathbf{V}(L_{3,u',u'}) \cap \mathbf{D}(y_{u'})}\\
			=& \bigcup_{i = 2,3} \overline{\mathbf{V}(L_{\frac{3u'+1}{2},u',u'}+\langle h_{u',i} \rangle +\langle g_{\frac{3u'+1}{2},u',u'}^{(3u'+1)},...,g_{\frac{3u'+1}{2},u',u'}^{(m)} \rangle) \cap \mathbf{D}(y_{u'})}
		\end{align*}
	and both summands are irreducible of codimension $m+p-q+1 > m+2$ by Lemma \ref{open irreducible and codimension}(b) with $p = (3u'+1)/2$,
	$q = u'$ and $r = u'+1$ and Lemma \ref{open irreducible and codimension}(c) with $p = (3u'+1)/2$ and $q = r = u'$.
	By the symmetry,
	we have the same conclusion for
		\begin{align*}
			 \overline{Z_m^0 \cap \mathbf{V}(L_{3,u',u'}) \cap \mathbf{D}(z_{u'})}.
		\end{align*}
	Hence,
	for $2 < u' \leq 2u-1$,
	$Y_{u'}$
	is of codimension greater than $m+2$.
	Thus,
	the case in which $u'$ is odd is complete.

	Finally,
	we prove (b) with $u' = 2u$.
	By \eqref{L_{3,u'u'}の座標函数の詳細な計算},
		\begin{align*}
			Y_{2u} = Z_m^0 \cap \mathbf{V}(L_{3,2u,2u})	&=	\mathbf{V}(L_{3u,2u,2u} + \langle g_{3u,2u,2u}^{(6)},...,g_{3u,2u,2u}^{(m)} \rangle).
		\end{align*}
	Then this closed subset is irreducible of codimension $m+u+1$ by Proposition \ref{center irreducible components of D_4^1}.
	Furthermore,
	we have $u > 1$ by assumption.
	Hence $Y_{2u}$ is of codimension $m+u+1 > m+2$.
	\end{proof}
%
%
%
%
We remark on the symmetries of the irreducible components of the singular fiber.
	\begin{Lemma}\label{symmetries of $X$}
	Assume $m \geq 3$.
	The closed subsets $Z_m^0, Z_m^1, Z_m^2$ and $Z_m^3$ are mapped to another by $\psi_1$ and $\psi_2$ (see Notation \ref{def of symmetric morphism of D_4^1}) as follows:
		\begin{itemize}
		\item[(a)]  $\psi_1(Z_m^0) = Z_m^0$,
					$\psi_1(Z_m^1) = Z_m^2$,
					$\psi_1(Z_m^2) = Z_m^1$ and $\psi_1(Z_m^3) = Z_m^3$.
		\item[(b)]  $\psi_2(Z_m^0) = Z_m^0$,
					$\psi_2(Z_m^1) = Z_m^1$,
					$\psi_2(Z_m^2) = Z_m^3$ and $\psi_2(Z_m^3) = Z_m^2$.
		\end{itemize}
	\end{Lemma}
By Lemma \ref{symmetric morphisms of D_4^1},
we can prove this lemma in the same way as Lemma \ref{symmetries of $S$}.
	\begin{Prop}\label{rough decomposition of D_4^1}
		For $m \geq 3$,
		we have
			\begin{align*}
				S_m^0	&= Z_m^0 \cup Z_m^1 \cup Z_m^2 \cup Z_m^3.
			\end{align*}
		Moreover,
		$Z_m^1$, $Z_m^2$ and $Z_m^3$ are pairwise distinct.
	\end{Prop}
The proof of this proposition is the same as that of Proposition \ref{rough decomposition of D_4^0}

Now,
we give the irreducible decomposition of the singular fiber $S_m^0$.
The following proposition gives the irreducible decomposition for small $m$.
	\begin{Prop}\label{小さな次数でのD_4^1の既約分解}
		For $0 \leq m \leq 4$,
		the irreducible decomposition of the singular fiber $S_m^0$ is as follows.
			\begin{enumerate}
				\item[(a)]
					$S_0^0 = \mathbf{V}(L_{111})$,
				\item[(b)]
					$S_1^0 = \mathbf{V}(L_{111})$,
				\item[(c)]
					$S_2^0 = \mathbf{V}(L_{211})$,
				\item[(d)]
					$S_3^0 = Z_3^1 \cup Z_3^2 \cup Z_3^3$,
				\item[(e)]
					$S_4^0 = Z_4^1 \cup Z_4^2 \cup Z_4^3$,
			\end{enumerate}
		where $Z_m^i$'s are as in Definition \ref{definition of the irreducible components of singular fiber of D_4^1}.
		Moreover,
		for $1 \leq m \leq 4$,
		the codimension of any irreducible component $Z \subseteq S_m^0$ is
			\begin{center}
				$\mathrm{codim}_{(\mathbb{A}^3)_m}\ Z = m+2$.
			\end{center}
	\end{Prop}
This proposition can be proved in the same way as Proposition \ref{小さな次数でのD_4^0の既約分解}.
	\begin{Thm}\label{irreducible decompositions in D_4^1}
	For $m \geq 5$,
	the irreducible decomposition of the singular fiber $S_m^0$ is
		\begin{center}
		$S_m^0 = Z_m^0 \cup Z_m^1 \cup Z_m^2 \cup Z_m^3$,
		\end{center}
	where $Z_m^i$'s are as in Definition \ref{definition of the irreducible components of singular fiber of D_4^1}.
	\end{Thm}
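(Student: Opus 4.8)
The plan is to mimic the proof of Theorem \ref{irreducible decomposition D_4^0} almost verbatim; the one genuinely new ingredient, the irreducibility of $Z_m^0$, has already been isolated and proved in Lemma \ref{irreducibility of Z_m^0}, so most of what remains is bookkeeping.

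First I would reduce $g^{(l)}$ modulo $L_{111}=\langle x_0,y_0,z_0\rangle$. By Lemma \ref{reduction of g^{(l)} modulo L_{pqr}} with $p=q=r=1$ one gets $g^{(0)}\equiv g^{(1)}\equiv 0$, $g^{(2)}\equiv x_1^2$ and $g^{(3)}\equiv y_1z_1(x_1+y_1+z_1)$; hence $x_1$ lies in the radical of $L_{111}+\langle g^{(0)},...,g^{(m)}\rangle$, and modulo $L_{211}$ we have $g^{(3)}\equiv y_1z_1(y_1+z_1)$. Breaking $\mathbf{V}(y_1z_1(y_1+z_1))$ into its three coordinate pieces then gives, exactly as in the $D_4^0$ case, $S_m^0=\mathbf{V}(J_m^1)\cup\mathbf{V}(J_m^2)\cup\mathbf{V}(J_m^3)$.

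Next I would apply the decomposition $S_m^0=\overline{S_m^0\cap\mathbf{D}(y_1)}\cup\overline{S_m^0\cap\mathbf{D}(z_1)}\cup(S_m^0\cap\mathbf{V}(y_1,z_1))$. Since $y_1\in J_m^1$, $z_1\in J_m^2$ and $y_1+z_1\in J_m^3$, the defining ideals of Definition \ref{definition of the irreducible components of singular fiber of D_4^1} yield $\overline{S_m^0\cap\mathbf{D}(y_1)}=Z_m^2\cup Z_m^3$ and $\overline{S_m^0\cap\mathbf{D}(z_1)}=Z_m^1\cup Z_m^3$, with $Z_m^1,Z_m^2,Z_m^3$ irreducible of codimension $m+2$ in $(\mathbb{A}^3)_m$ by the Corollary following Lemma \ref{open irreducible and codimension}. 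For the remaining central piece, $g^{(2)}\equiv x_1^2$ modulo $L_{122}$ and $g^{(4)}\equiv x_2^2$ modulo $L_{222}$ by Lemma \ref{reduction of g^{(l)} modulo L_{pqr}}(2), so $S_m^0\cap\mathbf{V}(y_1,z_1)$ coincides, as a set, with $\mathbf{V}(L_{322}+\langle g^{(0)},...,g^{(m)}\rangle)=Z_m^0$, which is irreducible of codimension $m+2$ by Lemma \ref{irreducibility of Z_m^0}. Putting the three pieces together gives $S_m^0=Z_m^0\cup Z_m^1\cup Z_m^2\cup Z_m^3$, a union of four irreducible closed subsets, each of dimension $2m+1$.

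Finally I would verify irredundancy. As all four $Z_m^i$ have the same dimension $2m+1$, none is properly contained in another, so it suffices to check they are pairwise distinct. Now $Z_m^0\subseteq\mathbf{V}(y_1,z_1)$, whereas $y_1$ or $z_1$ is generically nonzero on each of $Z_m^1,Z_m^2,Z_m^3$, so $Z_m^0$ is distinct from the other three; and by the symmetries $\psi_1,\psi_2$ of Lemma \ref{symmetries of $X$} it remains only to separate $Z_m^1$ from $Z_m^2$ and from $Z_m^3$, for which the jet $(0,0,t)\in\mathbf{V}(J_m^1)\cap\mathbf{D}(z_1)\subseteq Z_m^1$ suffices, since $z_1\in I_m^2$ and $y_1+z_1\in I_m^3$. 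I do not expect any real obstacle here: the only step needing a little care beyond the $D_4^0$ argument is the set-theoretic identification of the central piece with $Z_m^0$, and all the actual difficulty — the irreducibility of $Z_m^0$ — has been front-loaded into Lemma \ref{irreducibility of Z_m^0} and Proposition \ref{center irreducible components of D_4^1}.
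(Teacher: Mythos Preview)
Your proposal is correct and follows essentially the same route as the paper: reduce modulo $L_{111}$ to obtain $g^{(2)}\equiv x_1^2$ and $g^{(3)}\equiv y_1z_1(y_1+z_1)$ modulo $L_{211}$, split $S_m^0$ into $\mathbf{V}(J_m^1)\cup\mathbf{V}(J_m^2)\cup\mathbf{V}(J_m^3)$, then use the $\mathbf{D}(y_1)/\mathbf{D}(z_1)/\mathbf{V}(y_1,z_1)$ decomposition together with the Corollary to Lemma~\ref{open irreducible and codimension} and Lemma~\ref{irreducibility of Z_m^0} to identify the four irreducible pieces, and finally separate them using a jet and the symmetries of Lemma~\ref{symmetries of $X$}. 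Your handling of the extra $x_1y_1z_1$ term in $g^{(3)}$ is in fact slightly more explicit than the paper's, which silently absorbs it into the radical via $x_1^2$.
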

	\begin{proof}
	By Proposition \ref{rough decomposition of D_4^1},
		\begin{alignat*}{7}
		S_m^0 	=& Z_m^0 \cup Z_m^1 \cup Z_m^2 \cup Z_m^3.
		\end{alignat*}
	Moreover,
	by Corollary \ref{calculation of the codimension of three irreducible components of S_m^0 of D_4^1} and Lemma \ref{irreducibility of Z_m^0},
	$Z_m^0$,
	$Z_m^1$,
	$Z_m^2$ and $Z_m^3$ are irreducible of codimensions $m+2$ in $(\mathbb{A}^3)_m$.
	Hence we only have to show that $Z_m^i$,
	for $0 \leq i \leq 3$,
	are pairwise distinct.
	
	For $m \geq 5$,
	we have $Z_m^0 \cap (\mathbf{D}(y_1) \cup \mathbf{D}(z_1)) = \emptyset$ while $Z_m^i \cap (\mathbf{D}(y_1) \cup \mathbf{D}(z_1)) \neq \emptyset$ for $1 \leq  i \leq 3$,
	so $Z_m^0$ is different from $Z_m^1$,
	$Z_m^2$ and $Z_m^3$.
	Moreover,
	by Proposition \ref{rough decomposition of D_4^1} $Z_m^1$,
	$Z_m^2$ and $Z_m^3$ are pairwise distinct.
	This complete the proof.
	\end{proof}

To conclude this paper,
we prove the following theorem on the jet schemes of $D_4^1$-singularities,
which is analogous to Theorem \ref{main theorem of D_4^0} and the characteristic $0$ case \cite[Theorem 3.17]{Ky}.
	\begin{Thm}\label{Config of D_4^1}
	Let $k$ be an algebraically closed field of characteristic $2$,
	$S \subset \mathbb{A}^3$ the surface defined by $g = x^2 + y^2z + yz^2 + xyz$ in the affine space over $k$,
	$S_m^0$ the singular fiber of the $m$-th jet scheme $S_m$ with $m \geq 5$ and $Z_m^0,...,Z_m^3$ its irreducible components as in Definition \ref{definition of the irreducible components of singular fiber of D_4^1}.
		\begin{itemize}
			\item[(a)]
				For $0 \leq i < j \leq 3$,
				$Z_m^i \cap Z_m^j \subsetneq Z_m^0$.
			\item[(b)]
				For $1 \leq i, j \leq 3$ with $i \neq j$,
				$Z_m^0 \cap Z_m^i \not\subseteq Z_m^0 \cap Z_m^j$.
			\item[(c)]
				For $1 \leq i, j \leq 3$ with $i \neq j$,
				$Z_m^i \cap Z_m^j \subsetneq Z_m^0 \cap Z_m^i$.
			\item[(d)]
				For $1 \leq i < j \leq 3$ and $1 \leq l \leq 3$,
				$Z_m^0 \cap Z_m^l \not\subseteq Z_m^i \cap Z_m^j$.
		\end{itemize}
	In particular,
	for $0 \leq i<j \leq 3$,
	$Z_m^i \cap Z_m^j$ is maximal in  $\{Z_m^i \cap Z_m^j \mid i, j \in \{0,1,2,3\}, i \neq j\}$ with respect to the inclusion relation if and only if $(i,j) = (0,1), (0,2), (0,3)$.
	\end{Thm}
	\begin{proof}
	We argue as in the proof of Theorem \ref{main theorem of D_4^0}.
	
	(a)
	Note that
		\begin{center}
		$g^{(4)} \equiv f^{(4)}$
		\end{center}
	modulo $L_{221}$ or $L_{212}$ for $f = x^2+y^2z+yz^2$ since terms from $\mathbf{x}_2\mathbf{y}_2\mathbf{z}_1$ or $\mathbf{x}_2\mathbf{y}_1\mathbf{z}_2$ are of degree at least $5$,
	where $\mathbf{x}_i, \mathbf{y}_i, \mathbf{z}_i$ are as in Notation \ref{modulo polynomials}.
	Hence we can prove $Z_m^i \cap Z_m^j \subsetneq Z_m^0$ as in the proof of Theorem \ref{main theorem of D_4^0}.
	\medskip\\
	(b)
	By Lemma \ref{symmetries of $X$},
	it suffices to show that
		\begin{itemize}
			\item[$\rm(\hspace{.18em}i\hspace{.18em})$]
				$(0,t^2,t^2) \in Z_m^0 \cap Z_m^3$.
			\item[$\rm(\hspace{.08em}ii\hspace{.08em})$]
				$(0,t^2,t^2) \notin Z_m^0 \cap Z_m^1$.
		\end{itemize}
	Let $P = (0,t^2,t^2)$.
	We have $g(P) = 0^2 + (t^2)^2\cdot t^2 + t^2\cdot (t^2)^2 + 0\cdot t^2 \cdot t^2 = 2t^6 = 0$,
	hence we have $P \in S_m$.
	
	$\rm(\hspace{.18em}i\hspace{.18em})$
	Let us prove $P \in Z_m^0 \cap Z_m^3$.
	We note that $P$ corresponds to $x_{\alpha} = y_{\beta} = z_{\beta} = 0$ for $\alpha \in \{ 0,...,m\}$ and $\beta \in \{0,1,3,...,m\}$ and $y_2 = z_2 = 1$.
	Thus we have $P \in Z_m^0 = S_m \cap \mathbf{V}(L_{222})$.
	We put $P_s = (0,st+t^2,st+t^2)$ for $s \neq 0$,
	and then we have $g(P_s) = 0$ and $P_s$ corresponds to $x_{\alpha} = y_{\beta} = z_{\beta} = 0$ for $\alpha \in \{ 0,...,m\}$ and $\beta \in \{0,3,...,m\}$,
	$y_1 = z_1 = s \neq 0$ and $y_2 = z_2 = 1$.
	Hence we have $P_s \in \mathbf{V}(L_{211} + \langle y_1+z_1 \rangle) \cap \mathbf{D}(y_1)$.
	Taking the limit $s \rightarrow 0$,
	we have $P_s \rightarrow P$ and $P \in \overline{\mathbf{V}(L_{211} + \langle y_1+z_1 \rangle) \cap \mathbf{D}(y_1)} = Z_m^3$. 
	Therefore,
	we have $P \in Z_m^0 \cap Z_m^3$.
	
	$\rm(\hspace{.08em}ii\hspace{.08em})$
	We prove $P \notin Z_m^0 \cap Z_m^1$.
	We have
		\begin{center}
		$g^{(5)}_{221} = y_2^2z_1 + y_3z_1^2 + x_2y_2z_1 = z_1(y_2^2 + y_3z_1 + x_2y_2)$,
		\end{center}
	hence we have $y_2^2+y_3z_1+x_2y_2 \in J_m^1\cdot (R_m)_{z_1} \cap R_m = I_m^1$.
	Since $x_2, z_1 \in L_{322} \subseteq \sqrt{I_m^0}$ (see Remark \ref{D_4^1におけるI_m^0の変形}),
	we have $y_2 \in \sqrt{I_m^0 + I_m^1}$.
	Therefore we have $P \notin Z_m^0 \cap Z_m^1$.
	\medskip\\
	(c), (d)
	The assertions are proven in the same way as in the proof of Theorem \ref{main theorem of D_4^0}(c) and (d).
	\end{proof}
	\begin{Coro}
	For $m \geq 5$,
	the graph $\Gamma(S_m^0)$ obtained by Construction \ref{graph constructed by the singular fiber} is the resolution graph of a $D_4$-type singularity.
	\end{Coro}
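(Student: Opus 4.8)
The plan is to deduce this corollary directly from Theorem \ref{irreducible decompositions in D_4^1} and the theorem immediately preceding it, exactly as Corollary \ref{graph of D_4^0} was deduced in the $D_4^0$ case. First I would unwind Construction \ref{graph constructed by the singular fiber}: the vertex set $V$ of $\Gamma(S_m^0)$ is the set of irreducible components of $S_m^0$, and the edge set $E$ is the set of maximal elements of $\{Z_m^i \cap Z_m^j \mid i,j \in \{0,1,2,3\},\ i \neq j\}$ for the inclusion order, where $Z_m^i \cap Z_m^j \in E$ joins the vertices $Z_m^i$ and $Z_m^j$.

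Next I would invoke Theorem \ref{irreducible decompositions in D_4^1}: for $m \geq 5$ the irreducible components of $S_m^0$ are exactly the four pairwise distinct sets $Z_m^0, Z_m^1, Z_m^2, Z_m^3$, so $V = \{Z_m^0, Z_m^1, Z_m^2, Z_m^3\}$. Then I would apply the preceding theorem: the maximal elements of the poset of pairwise intersections are precisely $Z_m^0 \cap Z_m^1$, $Z_m^0 \cap Z_m^2$, $Z_m^0 \cap Z_m^3$, and these are pairwise distinct, while $Z_m^1 \cap Z_m^2$, $Z_m^2 \cap Z_m^3$, $Z_m^3 \cap Z_m^1$ are strictly contained in $Z_m^0$ and hence are not maximal. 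Therefore $E$ consists of the three distinct edges $Z_m^0 \cap Z_m^j$ for $j = 1,2,3$, each joining $Z_m^0$ to $Z_m^j$, and there are no other edges.

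Finally I would record that the resulting graph has a central vertex $Z_m^0$ joined to each of the three outer vertices $Z_m^1, Z_m^2, Z_m^3$ and nothing else, i.e. the star on four vertices, which is precisely the Dynkin diagram of type $D_4$ and hence the resolution graph of a $D_4$-singularity; here I would reproduce the same xymatrix diagram as in the proof of Corollary \ref{graph of D_4^0}, with $f$ replaced by $g$. I do not expect any genuine obstacle: the statement is a purely formal consequence of the two main results of this section, and the only point needing care — that the three ``triangle'' intersections $Z_m^i \cap Z_m^j$ with $i,j \in \{1,2,3\}$ are non-maximal — is already contained in the ``In particular'' clause of the preceding theorem.
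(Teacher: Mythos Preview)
Your proposal is correct and is exactly the argument the paper intends: the paper states this corollary without proof, leaving it as an immediate consequence of Theorem \ref{irreducible decompositions in D_4^1} and the preceding theorem, in complete parallel with the explicit proof of Corollary \ref{graph of D_4^0} in the $D_4^0$ case. Your unwinding of Construction \ref{graph constructed by the singular fiber} and identification of $V$ and $E$ reproduces that argument verbatim.
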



\begin{thebibliography}{99}
		\bibitem{Ar} \textsc{M. Artin}: \textit{Coverings of the rational double points in characteristic $p$}, Complex analysis and algebraic geometry, pp. 11--22. Iwanami Shoten, Tokyo, 1977.
		\bibitem{Is2} \textsc{S. Ishii}: \textit{Jet schemes, arc spaces and the Nash problem.(English, French summary)}, C. R. Math. Acad. Sci. Soc. R. Can.29(2007), no.1, 1--21.
		\bibitem{Is1} \textsc{S. Ishii}: \textit{Geometric properties of jet schemes}, Commun. Algebra 39(2011), no. 5, 1872--1882.
		\bibitem{Is} \textsc{S. Ishii}: \textit{Introduction to singularities, second edition.}, Springer, Tokyo, 2018.
		\bibitem{Ky} \textsc{Y. Koreeda}: \textit{On the configuration of the singular fibers of jet schemes of rational double points}, Comm. Algebra 50 (2022), no. 4, 1802--1820.
		\bibitem{M1} \textsc{H. Mourtada}: \textit{Jet schemes of normal toric surfaces},  Bull. Soc. Math. France 145 (2017), no. 2, 237--266.
		\bibitem{M2} \textsc{H. Mourtada}: \textit{Jet schemes of rational double point singularities}, Valuation theory in interaction, EMS Ser. Congr. Rep., Eur. Math. Soc., Z\"{u}rich (2014), 373--388.
	\bibitem{MP} \textsc{H. Mourtada, C. Pl\'{e}nat}: \textit{Jet schemes and minimal toric embedded resolutions of rational double point singularities}, Comm. Algebra 46 (2018), no. 3, 1314--1332.
		\bibitem{Na} \textsc{J. F. Nash}: \textit{Arc structure of singularities}, Duke Math.\ J. 81 (1995), no. 1, 31--38.
	\end{thebibliography}
\end{document}